\theoremstyle{plain}
\newtheorem{theorem}{Theorem}[section]
\newtheorem{maintheorem}{Theorem}
\newtheorem{lemma}[theorem]{Lemma}
\newtheorem{corollary}[theorem]{Corollary}
\theoremstyle{remark}
\numberwithin{equation}{section}
\newcommand{\NN}{{\mathbb{N}}}
\newcommand{\ZZ}{{\mathbb{Z}}}
\newcommand{\RR}{{\mathbb{R}}}
\newcommand{\EU}{{\rm\bf S}}
\newcommand{\vv}{{\rm\bf v}}
\newcommand{\C}{{\mathcal C}}
\newcommand{\loc}{\textnormal{loc}}
\newcommand{\Inn}{\textnormal{In}}
\newcommand{\Out}{\textnormal{Out}}
\begin{document}

\title[Moduli of stability]{Moduli of stability for heteroclinic cycles \\ of periodic solutions}

\author[M. Carvalho]{Maria Carvalho}
\address{Maria Carvalho\\ Centro de Matem\'{a}tica da Universidade do Porto\\ Rua do
Campo Alegre 687\\ 4169-007 Porto\\ Portugal}
\email{mpcarval@fc.up.pt}

\author[A. Lohse]{Alexander Lohse}
\address{Alexander Lohse\\ Fachbereich Mathematik\\ Universit\"at Hamburg\\ Bundesstra{\ss}e 55\\ 20146 Hamburg\\ Germany}
\email{alexander.lohse@math.uni-hamburg.de}

\author[A. Rodrigues]{Alexandre A. P. Rodrigues}
\address{Alexandre Rodrigues\\ Centro de Matem\'{a}tica da Universidade do Porto\\ Rua do
Campo Alegre 687\\ 4169-007 Porto\\ Portugal}
\email{alexandre.rodrigues@fc.up.pt}

\date{\today}
\thanks{MC and AR were partially supported by CMUP (UID/MAT/00144/2019), which is funded by FCT with national (MCTES) and European structural funds through the programs FEDER, under
the partnership agreement PT2020. AR also acknowledges financial support from Program INVESTIGADOR FCT (IF/00107/2015).   Part of this work has been written during AR's stay in Nizhny Novgorod University, supported by the grant RNF 14-41-00044.
Visits to Porto by AL were funded through project 57338573  PPP  Portugal  2017  of  the  German  Academic  Exchange  Service
(DAAD),  sponsored  by  the  Federal  Ministry  of  Education  and  Research (BMBF)}
\keywords{Heteroclinic cycle; Historic behavior; Complete set of invariants.}
\subjclass[2010]{34C28, 34C37, 37C29, 37D05, 37G35}

\maketitle

\setcounter{tocdepth}{2}

\begin{abstract}
We consider $C^2$ vector fields in the three dimensional sphere with an attracting heteroclinic cycle between two periodic hyperbolic solutions with real Floquet multipliers. The proper basin of this attracting set exhibits historic behavior and from the asymptotic properties of its orbits we obtain a complete set of invariants under topological conjugacy in a neighborhood of the cycle. As expected, this set contains the periods of the orbits involved in the cycle, a combination of their angular speeds, the rates of expansion and contraction in linearizing neighborhoods of them, besides information regarding the transition maps and the transition times between these neighborhoods. We conclude with an application of this result to a class of cycles obtained by the lifting of an example of R. Bowen.
\end{abstract}

\section{Introduction}

In the study of dynamical systems it has long been of interest to identify systems that display similar behavior in the sense that their phase diagrams look qualitatively the same. For continuous systems $\dot{x}=f(x)$ given by some vector field $f$, this amounts to deciding under what conditions the flows generated by two different vector fields are topologically equivalent or even conjugate. In particular, it is desirable to find quantities of the system that are invariant under topological conjugacy and, moreover, fully characterize conjugacy classes of systems through a (minimal) number of these quantities. Such a collection is then called a \emph{complete} set of invariants.

In the context of heteroclinic dynamics, significant contributions to this type of question have been made by several authors. We briefly review the invariants under conjugacy that have been found for: (a) heteroclinic connections between equilibria; (b) attracting heteroclinic cycles between equilibria; and (c) heteroclinic connections associated to one periodic solution.
As far as we know, the description of complete sets of invariants for attracting heteroclinic cycles associated to periodic solutions has not yet been done.

For heteroclinic connections, Dufraine \cite{Dufraine}, building on the work of Palis \cite{Palis}, considers one-dimensional heteroclinic connections between two hyperbolic equilibria on a three-dimensional manifold, each with one real and one pair of complex conjugated eigenvalues. He finds a set of invariants involving two quantities: the ratio of the real parts of the complex eigenvalues, and an expression combining this ratio with their imaginary parts. Bonatti and Dufraine \cite{BonDuf} go on to extend this result to obtain a complete characterization of such a heteroclinic connection up to topological equivalence. Higher dimensional heteroclinic connections between equilibria are analyzed in a similar way by Susín and Simó \cite{SuSi}.

Takens \cite{Takens94} provides analogous investigations for an attracting heteroclinic cycle with two one-dimensional connections between hyperbolic equilibria, this time with only real eigenvalues. Under the assumption that the transitions between suitable cross sections to the cycle is instantaneous and the global maps are linear, he finds a complete set of three invariants that are intuitively compatible with the ones mentioned above: two ratios of eigenvalues as found by Palis \cite{Palis}, plus an expression relating these to properties of the global transition map. Completeness is proved by constructing a conjugacy based on asymptotic properties of Birkhoff time averages -- a technique we also use in this paper.

Carvalho and Rodrigues \cite{CR2017} consider a Bykov attractor -- a heteroclinic cycle between two hyperbolic equilibria on a three-dimensional sphere with a one-dimensional connection as in \cite{Dufraine} and a two-dimensional connection as in \cite{SuSi} between them. Extending the argument of \cite{Takens94}, they find a complete set of four invariants for this situation, namely a combination of the angular speeds of the equilibria, the rates of expansion and contraction in linearizing neighborhoods of them, besides information regarding the transition maps between these neighborhoods. See their paper also for a more detailed overview of the previous results that we mentioned here only briefly.

Beloqui \cite{Beloqui} considers a one-dimensional connection between a saddle-focus equilibrium and a periodic solution and derives an invariant under conjugacy. More precisely, Beloqui studies a heteroclinic connection associated to a saddle-focus $p$ (with eigenvalues $-C_p \pm i\omega$ and $E_p$) and a periodic solution $\mathcal{P}$ (with minimal period $\wp$ and real Floquet exponents $C_\mathcal{P}$ and $E_\mathcal{P}$ such that $|C_\mathcal{P}| < 1$ and $|E_\mathcal{P}| > 1$) and shows that $\frac{C_p}{\omega \,E_\mathcal{P}} $ is a topological invariant. By a similar argument but under additional assumptions, Rodrigues \cite{Rodrigues2015} obtains a new invariant, given by
$$\frac{1}{E_\mathcal{P}+C_p} \left(\omega \,E_\mathcal{P} + \frac{2\pi}{\wp}\,C_p \right).$$

Our contribution lies in combining and extending techniques used in the previous works to address the question of complete sets of topological invariants for attracting heteroclinic cycles with two-dimensional connections between two hyperbolic periodic solutions with real Floquet multipliers (called ``PtoP" cycle). From the asymptotic properties of the orbits, the transition maps and the transition times between linearizing neighborhoods of the periodic solutions, we obtain a complete set of invariants under topological conjugacy in the basin of attraction of the cycle. Unsurprisingly, the eight invariants we find include the two minimal periods of the periodic solutions; the other six are closely related to those found in earlier works. They reduce to those found in \cite{CR2017} under the assumptions therein on the global transitions (which we are able to loosen here).

While our results are primarily of interest in terms of further understanding and classifying heteroclinic behavior from an abstract point of view, heteroclinic cycles between periodic solutions appear in several models of real-life systems: for instance, Zhang, Krauskopf and Kirk \cite{ZKK} consider a four-dimensional model for intracellular calcium dynamics where a codimension one ``PtoP" cycle between two periodic solutions appears. Their setup differs from our situation, though, by one of the connections being one-dimensional.

This paper is structured as follows. In Sections~\ref{se:hypotheses} and \ref{sse:definitions} we introduce the setting and establish some notation. Section~\ref{se:mainresults} states our main result, giving a complete list of invariants under topological conjugacy for a ``PtoP" heteroclinic cycle. In Sections~\ref{se:Local} and \ref{se:hitting-times} we analyze the local and global dynamics near the cycle as well as the hitting times of the trajectories attracted to it. The proof of our main theorem is spread over Sections~\ref{se:invariants} and \ref{se:completeness}, where we derive the invariants and prove that they indeed form a complete set. We conclude with an example in Section~\ref{Example}, obtained by the lift of a well-known system studied in \cite{Takens94} and attributed to Bowen.

\section{The setting}\label{se:hypotheses}

We consider $C^2$ vector fields $f: \EU^3 \rightarrow T\EU^3$ on the unit sphere $\EU^3$ and the corresponding differential equations $\dot{x}=f(x)$ subject to initial conditions $x(0)=x_0\in \EU^3$. We will assume that $f$ has the following properties:

\begin{enumerate}
\item[(\textbf{P1})] There are two hyperbolic periodic solutions $\C_1$ and $\C_2$ of saddle-type, with minimal periods $\wp_1$ and $\wp_2$, within which the flow has constant angular speed $\omega_1>0$ and $\omega_2>0$, respectively. The Floquet multipliers of $\C_1$ and $\C_2$ are real and given by
\begin{eqnarray*}
e^{E_1} > 1 \quad &\text{and}& \quad e^{-C_1} < 1 \quad \quad \text{for} \quad \C_1 \\
e^{E_2} >1 \quad &\text{and}& \quad e^{-C_2} < 1  \quad \quad \text{for} \quad \C_2
\end{eqnarray*}
where $C_1 > E_1$ and $C_2 > E_2$.
\medskip

\item[(\textbf{P2})] The stable manifolds $W^s_{\loc}(\C_{1}), \,W^s_{\loc}(\C_{2})$ and the unstable manifolds $W^u_{\loc}(\C_1), \,W^u_{\loc}(\C_2)$ are smooth surfaces homeomorphic to a cylinder.
\medskip

\item[(\textbf{P3})] For every $j \in \{1,2\}$, each connected component of $W^u(\C_{j}) \setminus \{\C_j\}$ coincides with a selected connected component of $W^s(\C_{(j+1)\,\text{mod}\,2}) \setminus \{\C_{(j+1)\,\text{mod}\,2}\}$.
\end{enumerate}

\bigskip

The two periodic solutions $\C_1$ and $\C_2$ and the set of trajectories referred to in (P3) build a heteroclinic cycle we will denote hereafter by $\mathcal{H}$. The assumptions (P1) and (P3) ensure that $\mathcal{H}$ is asymptotically stable (cf. \cite{KM1, KM2}), that is, there exists an open neighborhood $V^0$ of $\mathcal{H}$ in $\RR^3$ such that every solution starting in $V^0$ remains inside $V^0$ for all positive times and is forward asymptotic to $\mathcal{H}$. This open set $V^0$ is part of the basin of attraction of $\mathcal{H}$, which we denote by $\mathfrak{B}(\mathcal{H})$.

\medskip

Following the strategy adopted in \cite{Takens94, CR2017}, we will select cross sections (submanifolds of dimension two) inside linearizing neighborhoods of the periodic solutions (see Section~\ref{se:Local} for more details) and assume that, in appropriate coordinates, we have:
\medskip

\begin{enumerate}
\item[(\textbf{P4})] The transition maps are linear with diagonal and non-singular matrices given by
\tiny
$\left[ {\begin{array}{ccc}
1 & 0 & 0 \\
0 & a & 0 \\
0 & 0 & b \\
\end{array}}
\right]$
\normalsize
and
\tiny
 $\left[ {\begin{array}{ccc}
1 & 0 & 0 \\
0 & c & 0 \\
0 & 0 & d \\
\end{array}}
\right]$
\normalsize
with $a,\, c >0$, $0 < b,\, d \leq 1$.
\medskip

\item[(\textbf{P5})] The transition times between these cross sections are non-negative constants, say $s_1$ and $s_2$, not necessarily equal.
\medskip
\item[(\textbf{P6})]  The periodic solutions $\C_1$ and $\C_2$  have the same chirality. This means  that near $\C_1$ and $\C_2$ all solutions turn in the same direction around the two-dimensional connections $W^u(\C_1)$ and $W^u(\C_2)$. This is  a reformulation of the concept of \emph{similar chirality of two equilibria} proposed in Section 2.2 of \cite{LR2015}.
\end{enumerate}

\medskip

We denote by $\mathfrak{X}^r_{\text{PtoP}}(\EU^3)$ the set of $C^r$, $r\geq 2$, smooth vector fields in $\EU^3$ which satisfy the assumptions (P1)--(P6), endowed with the $C^r$-Whitney topology.

\section{Background material}\label{sse:definitions}

For the reader's convenience, we include in this section some definitions, notation and preliminary results.

\subsection{Invariants under conjugacy}
Given two vector fields $\dot{x} = f_1(x)$ and $\dot{x} = f_2(x)$, defined in domains $D_1\subset {\EU^3}$ and $D_2\subset {\EU^3}$, respectively, let $\varphi_i(t, x_0)$ be the unique solution of $\dot{x}=f_i(x)$ with initial condition $x(0)=x_0$, for $i \in \{1,2\}$. The corresponding flows are said to be \emph{topologically equivalent} in subregions $U_1 \subset D_1$ and $U_2 \subset D_2$ if there exists a homeomorphism $h: U_1 \rightarrow U_2$ which maps solutions of the first system onto solutions of the second preserving the time orientation. If $h$ is also time preserving, that is, if for every $x \in {\EU^3}$ and every $t \in \RR$, we have $\varphi_1(t, h(x))=h(\varphi_2(t,x))$, the flows are said to be \emph{topologically conjugate} and $h$ is called a \emph{topological conjugacy}.
A set of invariants under topological conjugacy is said to be \emph{complete} if, given two systems with equal invariants, there exists a topological conjugacy between the corresponding flows.

\subsection{Terminology}\label{sse:notation}

Given a compact, flow-invariant set $\mathcal{K} \subset \EU^3$, its \emph{basin of attraction} $\mathfrak{B}(\mathcal{K})$ is the set of points eventually attracted to $\mathcal{K}$, that is,
$$\mathfrak{B}(\mathcal{K}):= \Big\{x \in {\EU^3}  \colon \, \omega(x) \subset \mathcal{K} \Big\}$$
where $\omega(x)$ stands for the $\omega$-limit set of the trajectory of $x$.

We are especially interested in the case where $\mathcal{K}$ is a heteroclinic cycle. Let $\xi_1$ and $\xi_2$ be hyperbolic invariant sets. We say that there is a \emph{heteroclinic connection} from $\xi_1$ to $\xi_2$ if $W^u(\xi_1) \cap W^s(\xi_2) \neq \emptyset$. Note that this intersection may contain more than one trajectory and be of dimension greater than one. If there exist finitely many invariant hyperbolic sets $\xi_1, \ldots ,\xi_k$ and cyclic heteroclinic connections between them, namely $W^u(\xi_i) \cap W^s(\xi_{i+1}) \neq \emptyset$ for every $i \in \{1, \cdots, k-1\}$ and $W^u(\xi_k) \cap W^s(\xi_1) \neq \emptyset$, then the union of all sets and connections is called a \emph{heteroclinic cycle}. The sets $\xi_i$ may be equilibria, periodic solutions or more complicated invariant sets.

\subsection{Constants}\label{sse:constants}

For future use, we settle that:
$$\begin{array}{llll}
R_1 = \frac{\omega_1 \, \wp_1}{2\pi} \quad & R_2 = \frac{\omega_2 \,\wp_2}{2\pi} \quad & \gamma_1 = \frac{C_1}{E_2} \quad & \qquad \quad \quad \gamma_2 = \frac{C_2}{E_1}  \\ \\
\delta_1 = \frac{C_1}{E_1} \quad & \delta_2 = \frac{C_2}{E_2} \quad & \delta = \delta_1\,\delta_2 & \\ \\
\tau_1 = \frac{1}{E_1}\,(1+\gamma_1) \quad &  \tau_2 = \frac{1}{E_2}\,(1+\gamma_2). & & \\ \\
\end{array}$$

\noindent According to the assumptions, we have $\tau_1, \,\tau_2 > 0$, $\delta_1 > 1$ and $\delta_2 > 1$. Notice also that
\begin{eqnarray*}
\tau_1 &=& \frac{1}{E_1}\,(1+\gamma_1) = \frac{C_1 + E_2}{E_1\,E_2}\\
\tau_2 &=& \frac{1}{E_2}\,(1+\gamma_2) = \frac{E_1 + C_2}{E_1\,E_2} \\
\delta &=& \gamma_1\,\gamma_2 = \delta_1\,\delta_2 = \frac{C_1\,C_2}{E_1 \,E_2}.
\end{eqnarray*}

\section{Main result}\label{se:mainresults}

We now state the main theorem of this work. In Section~\ref{Example} we apply it to an example.

\begin{maintheorem}\label{teo:maintheorem-1}
Let $f\in \mathfrak{X}^r_{\text{\emph {PtoP}}}(\EU^3)$, $r\geq 2$. Then
$$\left\{\wp_1, \,\wp_2, \,\gamma_1, \,\gamma_2, \,\omega_1 + \gamma_1\omega_2, \, \omega_2 + \gamma_2 \omega_1, \,-\frac{1}{E_1} \log d + (s_1-\gamma_1 s_2), \,-\frac{1}{E_2} \log b + (s_2-\gamma_2 s_1)\right\}$$
is a complete set of invariants for $f$ under topological conjugacy in a neighborhood of the heteroclinic cycle $\mathcal{H}$.
\end{maintheorem}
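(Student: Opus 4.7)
The plan is to follow the strategy of \cite{Takens94, CR2017} and adapt it to the periodic-orbit setting. The argument splits naturally into two parts: showing that each of the eight quantities is preserved under topological conjugacy, and establishing completeness by constructing an explicit conjugacy between any two fields in $\mathfrak{X}^r_{\textrm{PtoP}}(\EU^3)$ whose invariants agree.

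For invariance, the minimal periods $\wp_1$ and $\wp_2$ are preserved by any time-preserving homeomorphism, since its restriction to $\C_j$ conjugates the flow to a rotation of period $\wp_j$. For the remaining six quantities I first fix linearizing neighborhoods of $\C_1$ and $\C_2$, pick incoming and outgoing cross sections as in Section~\ref{se:Local}, and compute explicitly, in the linear coordinates, the local passage maps. Their form involves the ratios $C_j/E_j$ and the angular speeds $\omega_j$. Composing with the global transitions dictated by (P4)--(P5) produces a Poincar\'e-type map around the cycle whose iterated action on a point $x\in \mathfrak{B}(\mathcal{H})$ dictates how $x$ is squeezed towards $\mathcal{H}$. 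The associated hitting times (Section~\ref{se:hitting-times}) grow along a geometric progression whose rates, angular phases and constant offsets are encoded exactly by $\gamma_1,\gamma_2$, by $\omega_1+\gamma_1\omega_2$ and $\omega_2+\gamma_2\omega_1$, and by the two expressions $-\tfrac{1}{E_j}\log(\cdot)+(s_j-\gamma_j s_{(j+1)\bmod 2})$ appearing in the theorem. Since the sequence of hitting times of an orbit against a cross section is preserved by any topological conjugacy (times are preserved, images of cross sections of $f$ are cross sections of $\tilde f$, and successive visits are indexed by integer order along the trajectory), a Birkhoff-type asymptotic comparison forces each of the eight coefficients to coincide, as will be carried out in Section~\ref{se:invariants}.

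For completeness (Section~\ref{se:completeness}), given $f,\tilde f\in \mathfrak{X}^r_{\textrm{PtoP}}(\EU^3)$ with the same invariants, I would build a conjugacy $h$ stage by stage. Define $h$ first on $\C_1$ and $\C_2$ using the matching periods and the chirality assumption (P6), so that the restrictions are unique up to a global phase. Extend to the local cross sections of $\C_j$: the equality of $\gamma_j$ and of the Floquet-based local first-return maps (which are conjugate linear contractions/expansions once the ratios $C_j/E_j$ and the angular speeds are fixed) determines $h$ on each linearizing neighborhood up to the chosen phase. Next propagate along the two-dimensional heteroclinic connections using the constant transition times $s_1, s_2$ and the diagonal transition maps of (P4); the coincidence of the last two invariants ensures that when $h$ is carried once around the cycle the composition is consistent with itself, so that the phase and the radial scale glue rather than diverge. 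Finally, saturate by the flow to cover a whole neighborhood of $\mathcal{H}$, which is possible because $\mathfrak{B}(\mathcal{H})$ contains an open neighborhood of the cycle.

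The hard part will be the gluing step in the completeness direction. Because the two-dimensional connections wrap around the cross-section cylinders, the angular component of $h$ must match after one full loop around $\mathcal{H}$, and it is only through the particular combinations $\omega_1+\gamma_1\omega_2$ and $\omega_2+\gamma_2\omega_1$ that the angular increments produced by the local linearized rotations and the global transitions compensate one another. Establishing that this compensation yields a globally defined, continuous $h$, rather than one only defined on a dense set of orbits, requires careful control of the hitting times as points approach $W^s(\mathcal{H})$, where they diverge logarithmically; this, together with the chirality alignment that (P6) provides, is the technical heart of the argument.
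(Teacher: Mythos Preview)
Your invariance half follows the paper's route in Section~\ref{se:invariants}: both extract the eight quantities as asymptotics of the hitting-times sequence (Lemma~\ref{le:calculus}, Corollaries~\ref{cor:speed-of-convergence} and~\ref{cor:invariants2}). One correction: a conjugacy does not literally preserve hitting times against \emph{fixed} cross sections, since $h(\Out(\C_j))$ need not coincide with the chosen $\Out(\overline{\C}_j)$ for $\tilde f$; it only forces the two hitting-times sequences to be uniformly close, and it is the limits in Corollary~\ref{cor:speed-of-convergence} that are genuinely preserved. The paper states this explicitly at the start of Section~\ref{se:completeness}.

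Your completeness half, however, has a real gap. You propose to define $h$ locally near each $\C_j$ via the linearized first-return data and then propagate and glue. But the local passage map $\Phi_j$ in \eqref{local map} depends on $\delta_j=C_j/E_j$ and on $\omega_j/E_j$, and neither is determined by the eight invariants: two fields can share $\wp_j,\gamma_j,\omega_1+\gamma_1\omega_2,\omega_2+\gamma_2\omega_1$ and the two transition combinations while having different $\delta_1,\delta_2$ (only the product $\delta_1\delta_2=\gamma_1\gamma_2$ is pinned down). Hence the local Poincar\'e maps near $\C_1$ for $f$ and $\tilde f$ need not be conjugate, and there is no canonical local $h$ to start from. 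The obstruction is not merely an angular phase to be absorbed at the gluing step; it appears already in the radial coordinate at the first cross section.

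The paper circumvents this with Takens' device, which is the missing idea in your outline. To each $P\in\Out(\C_2)$ one associates an \emph{adjusted} hitting-times sequence $(\widetilde t_i)$, produced from $(t_i)$ by a limiting procedure (equations~\eqref{eq:Ttil-ij}--\eqref{eq:times-odd-til} and Lemmas~\ref{le:calculus2}--\ref{le:prop-t-til}) so that $(\widetilde t_i)$ satisfies \emph{exact} recursions whose coefficients are precisely the invariants. Subsection~\ref{sse:special-point} then shows that the coordinates $(\rho_0,\theta_0)$ of a point can be recovered from $(\widetilde t_i)$ by formulae involving only the invariants, so the same sequence, decoded through the (equal) invariants of $g$, determines a unique $Q_P$ on the $g$-side. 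The map $P\mapsto Q_P$, extended by the flows, is the conjugacy. This global encoding by adjusted hitting times is exactly what lets one bypass the non-conjugacy of the individual local maps; without it, the local-to-global construction you sketch cannot get off the ground.
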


The orbits of all points in the proper basin of attraction of $\mathcal{H}$ exhibit historic behavior, {a terminology introduced by Ruelle in \cite{R2001}.} This means that there exists a continuous map $G: \EU^3 \to \mathbb{R}$ whose sequence of Birkhoff time averages along each orbit in $\mathfrak{B}(\mathcal{H}) \setminus \mathcal{H}$ does not converge. Clearly, in the particular configuration of an attracting heteroclinic cycle between two periodic solutions $\C_1$ and $\C_2$, the $\omega$-limit of the orbits starting in $\mathfrak{B}(\mathcal{H}) \setminus \mathcal{H}$ includes the disjoint closed sets $\C_1$ and $\C_2$. In addition, the assumption (P1) on the values of $C_1, C_2, E_1$ and $E_2$ and the fact that the time these orbits spend near each one of the periodic solutions $\C_1$ and $\C_2$ is well distributed allow us to find such a map $G$. A proof of this fact may be read on the pages 1889-1891 of \cite{LR2017}.

\medskip

Observe that, if we assume that $s_1 = s_2 = 0$ (that is, both transitions are instantaneous), then the complete set of invariants reduces to
$$\left\{\wp_1, \,\wp_2, \,\gamma_1, \,\gamma_2, \,\omega_1 + \gamma_1\omega_2, \, \omega_2 + \gamma_2 \omega_1, -\frac{1}{E_1} \log d, \,-\frac{1}{E_2} \log b \right\}$$
a set which generalizes the ones found in \cite{Takens94} and \cite{CR2017}.

At the end of the paper the reader will gather convincing evidence that the essential steps of the proof of Theorem~\ref{teo:maintheorem-1} may be applied to attracting heteroclinic cycles between more than two hyperbolic periodic solutions, although the computations may be unwieldy. We conjecture that no qualitatively different invariant will arise within this more general setting. Regarding attracting homoclinic cycles associated to a periodic solution, see Section \ref{final}.

\section{Local and global dynamics in $\mathfrak{B}(\mathcal{H})$}\label{se:Local}

We will start defining two disjoint compact neighborhoods $V_1$ and $V_2$ of the $\C_{1}$ and $\C_{2}$, respectively, such that each boundary $\partial V_j$ is a finite union of smooth submanifolds (with boundary) which are transverse to the vector field.

\subsection{Local coordinates}\label{sse:Local-coodinates}
For $j \in \{1,2\}$, let $S_j$ be a cross section transverse to the flow at a point $P_j$ of $\C_{j}$. As $\C_{j}$ is hyperbolic, there is a neighborhood $\mathcal{V}^*_j$ of $P_j$ in $S_j$ where the first return map to $S_j$, denoted by $\pi_j$, is $C^1$ conjugate to its linear part (the eigenvalues of the derivative $D\pi_j(P_j)$ are precisely $e^{E_j}>1$ and $e^{-C_j}<1$). Moreover, for each $r\ge 2$ there is an open and dense subset of $\RR^2$ such that, if $C_j$ and $E_j$ lie in this set, then the conjugacy is of class $C^r$ (cf. \cite{Takens71}). The vector field associated to this linearization around $\C_j$ is represented by the system of differential equations given, in cylindrical coordinates $(\rho, \theta, z)$, by
\begin{equation}\label{ode of linearization}
\left\{
\begin{array}{l}
\dot{\rho}=-C_{j}\,(\rho - R_j) \\
\dot{\theta}=\omega_j \\
\dot{z}=E_{j}\,z
\end{array}
\right.
\end{equation}
\medskip
where $R_j=\frac{\omega_j \wp_j}{2\pi}$, whose solution with initial condition $(R_j + k, \,\theta_0, \,z_0)$, for $-\varepsilon \leq k \leq \varepsilon$, is
\begin{equation}\label{solutions of the ode of linearization}
t \in \mathbb{R} \quad \mapsto \quad \left\{
\begin{array}{l}
\rho(t) = R_j + k \, e^{-C_{j}\,t} \quad  \smallskip \\
\theta(t) = \theta_0 + \omega_j \,t  \quad  \text{mod } 2\pi. \\
z(t) = z_0 \, e^{E_{j}\,t}
\end{array}
\right.
\end{equation}
\medskip
and whose flow is $C^2$-conjugate to the flow of $f$ in a neighborhood of $\C_{j}$. Unless there is risk of misunderstanding, in what follows we will drop the label $\mod 2\pi$ when referring to the variable $\theta$. In these cylindrical coordinates,
\begin{itemize}
\item[(a)] the periodic solution $\C_{j}$ is the circle described by $\rho = R_j$ and $z=0$;
\medskip
\item[(b)] the local stable manifold $W^s_{\loc}(\C_{j})$ of $\C_{j}$ is the plane defined by $z=0$;
\medskip
\item[(c)] the local unstable manifold $W^u_{\loc}(\C_{j})$ of $\C_{j}$ is the cylindrical surface defined by $\rho=R_j$.
\end{itemize}
See the illustration in Figure~\ref{local_C}.

\begin{figure}[h]
\begin{center}
\includegraphics[height=10cm]{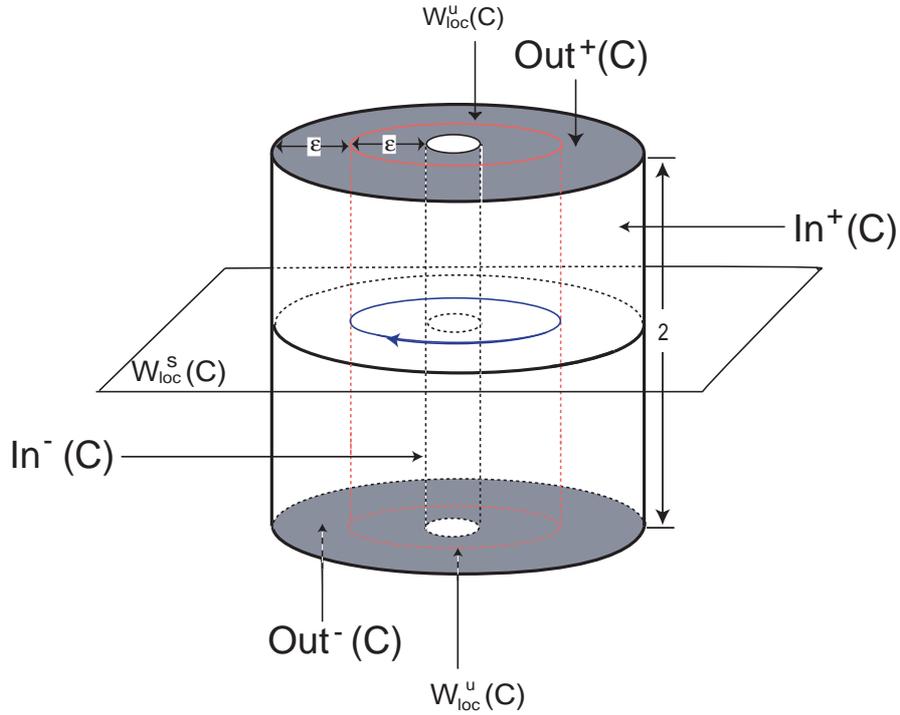}
\end{center}
\caption{\small Local data near a periodic solution $\C$.}
\label{local_C}
\end{figure}

\bigskip

We will analyze the dynamics inside a cylindrical neighborhood $V_j(\varepsilon)$ of $\C_{j}$, for some $\varepsilon > 0$, contained in the saturation of $\mathcal{V}^*_j$ by the flow and given by
$$V_j(\varepsilon) = \Big\{(\rho,\theta,z) \colon \quad 0 < R_j-\varepsilon \le \rho \le R_j+\varepsilon, \quad \theta \in [0, 2\pi[, \quad -\varepsilon \le z \le \varepsilon\,\Big\}.$$
When there is no risk of confusion, we will write $V_j$ instead of $V_j(\varepsilon)$. For $j \in \{1,2\}$, each $V_j$, called an \emph{isolating block} for $\C_{j}$, is homeomorphic to a hollow cylinder whose boundary is the union $\partial V_{j}= \Inn(\C_{j}) \cup \Out(\C_{j}) \cup \Delta(\C_{j})$ satisfying the following conditions:

\begin{itemize}
\item[(1)] $\Inn(\C_{j})$ is the union of the walls of $V_j$, that is,
$$\Inn(\C_{j}) = \Big\{(\rho, \theta, z)\colon \, \rho = R_j \pm \varepsilon, \quad \theta \in [0, 2\pi[, \quad |z| \le \varepsilon\Big\}$$
with two connected components which are locally separated by $W^u(\C_{j})$.
In cylindrical coordinates, $\Inn(\C_{j})\cap W^s(\C_{j})$ is the union of the two circles in $V_j$, namely
$$\Inn(\C_{j})\cap W^s(\C_{j}) = \Big\{(\rho, \theta, z)\colon \, \rho = R_j \pm \varepsilon, \quad \theta \in [0, 2\pi[, \quad z=0 \Big\}.$$
Forward trajectories starting at $\Inn(\C_{j})$ go inside $V_j$.
\bigskip
\item[(2)] $\Out(\C_{j})$ is the union of two annuli, the top and the bottom of $V_j$, that is,
$$\Out(\C_{j}) = \Big\{(\rho, \theta, z)\colon \, R_j - \varepsilon \le \rho \le R_j + \varepsilon, \quad \theta \in [0, 2\pi[, \quad z = \pm \varepsilon\Big\}$$
with two connected components which are locally separated by $W^s(\C_{j})$. The intersection $\Out(\C_{j})\cap W^u(\C_{j})$ is precisely the union of the two circles in $V_j$ given by
$$\Out(\C_{j})\cap W^u(\C_{j}) = \Big\{(\rho, \theta, z)\colon \, \rho=R_j, \quad \theta \in [0, 2\pi[, \quad z=\pm \varepsilon \Big\}.$$
Backward trajectories starting at $\Out(\C_{j})$ go inside $V_j$.
\bigskip
\item[(3)] The vector field is transverse to $\partial V_{j}$ at all points except possibly at the circles $\Delta(\C_{j})=\overline{\Inn(\C_{j})}\cap \overline{\Out(\C_{j})}$, parameterized by $\rho=R_j \pm \varepsilon$ and $z=\pm \varepsilon$.
\medskip
\end{itemize}

\medskip

Denote by $\Inn^+(\C_j)$ the intersection of $\Inn(\C_j)$ with $ \rho = R_j + \varepsilon$, and let $\Out^+(\C_j)$ be the intersection of $\Out(\C_j)$ with $z=\varepsilon$. More precisely,
\begin{eqnarray}
\Inn^+(\C_j) &=& \Big\{(\rho, \theta, z)\colon \, \rho = R_j + \varepsilon, \quad \theta \in [0, 2\pi[, \quad - \varepsilon \leq z \leq \varepsilon\Big\} \medskip \\
\Out^+({\C_j})&=& \Big\{(\rho, \theta, z)\colon \, R_j - \varepsilon \le \rho \le R_j + \varepsilon, \quad \theta \in [0, 2\pi[, \quad z = \varepsilon \Big\}. \nonumber
\end{eqnarray}

\subsection{Local dynamics}\label{sse:Local-dynamics}
In this subsection we restrict the analysis to initial points of $\Inn (\C_j)$ with $z_0>0$ and $\rho=R_j+\varepsilon$.  The other cases are entirely similar.
Using the dynamics in local coordinates described by \eqref{solutions of the ode of linearization}, we now evaluate the time needed by an initial condition $(R_j+ \varepsilon, \,\theta_0, \,z_0) \in \Inn^+(\C_{j})$ to reach $\Out^+(\C_{j})$.
\medbreak

To estimate this time $T$, we have just to solve the equation
$$z_0 \, e^{E_{j}\,T} = \varepsilon$$
from which we deduce that
$$T= -\frac{1}{E_j}\,\log\,\left(\frac{z_0}{\varepsilon}\right).$$
Therefore, the local map, acting inside $V_j$ and sending $\Inn^+(\C_{j})$ into $\Out(\C_{j})$, is given by
\begin{eqnarray}\label{local map}
\Phi _{j}^+(R_j + \varepsilon, \theta_0, z_0) &=& \left(\rho(T), \, \theta(T), \, z(T)\right) \\
&=& \left(R_j + \varepsilon \left(\frac{z_0}{\varepsilon}\right)^{\delta_j}, \,\,\, \theta_0-\frac{\omega_j}{E_j}\,\log \,\left(\frac{z_0}{\varepsilon}\right) \,\, \text{mod } 2\pi,\,\,\, \varepsilon\right). \nonumber
\end{eqnarray}

\subsection{Transition maps}\label{subglobal}

Denote by $[\C_1\rightarrow\C_2]$ the component of the heteroclinic cycle $\mathcal{H}$ formed by the coincidence between $W^u(\C_1)$ and $W^s(\C_2)$. Similarly, $[\C_2\rightarrow\C_1]$ represents the coincidence between $W^s(\C_1)$ and $W^u(\C_2)$. Notice that $[\C_1\rightarrow \C_2]$ connects points with $z=\varepsilon$ in $V_{1}$ (respectively $z=-\varepsilon$) to points with $\rho=R_2+\varepsilon$ (respectively $\rho=R_2-\varepsilon$) in $V_{2}$.

Notice that $\Out^+(\C_1) \setminus [\C_1 \rightarrow \C_2]$ has two connected components (the same holds for $\Out^+(\C_2)$) and that points in $\Out^+(\C_1)$ near $W^u(\C_1)$ are mapped into $\Inn^+(\C_2)$ along a flow-box around the connection $[\C_1\rightarrow\C_2]$; analogously, points in $\Out^+(\C_2)$ near $W^u(\C_2)$ are mapped into $\Inn^+(\C_1)$ along the same flow-box.

Recall that, by Property (P4), we are assuming that both transition maps from $\Out^\pm (\C_j)$ to $\Inn^\pm(\C_j)$, for $j=1,2$, have a linear component with submatrices
\tiny
$\left[ {\begin{array}{cc}
   a & 0 \\
   0 & b \\
  \end{array} }
\right]$
\normalsize
from $\Out (\C_1)$ to $\Inn (\C_2)$, and
\tiny
$\left[ {\begin{array}{cc}
   c & 0 \\
   0 & d \\
  \end{array} } \right]$
\normalsize
from $\Out (\C_2)$ to $\Inn(\C_1)$, for some $0 < b,\,d \leq 1$ and $a,\,c >0$. Therefore, the transition maps $\Psi_{12}^+ \colon \,\Out^+(\C_1) \,\,\to\,\, \Inn^+(\C_2)$ and $\Psi_{21}^+ \colon \,\Out^+(\C_2) \,\,\to \,\,\Inn^+(\C_1)$ are expressed in cylindrical coordinates as
\begin{eqnarray}
\Psi_{12}^+(\rho, \theta, \varepsilon) &=& \Big(R_2 + \varepsilon,\,\,\, a\,\theta \,\,\, \text{mod }2\pi, \,\,\,b\left(\rho-R_1\right)\Big)
\end{eqnarray}
and
\begin{eqnarray}
\Psi_{21}^+(\rho, \theta, \varepsilon) &=& \Big(R_1 + \varepsilon, \,\,\,c\,\theta \,\,\, \text{mod }2\pi, \,\,\, d\left(\rho-R_2\right)\Big).
\end{eqnarray}
Figure~\ref{matrizes1} summarizes this information.

\begin{figure}[h]
\begin{center}
\includegraphics[height=8cm]{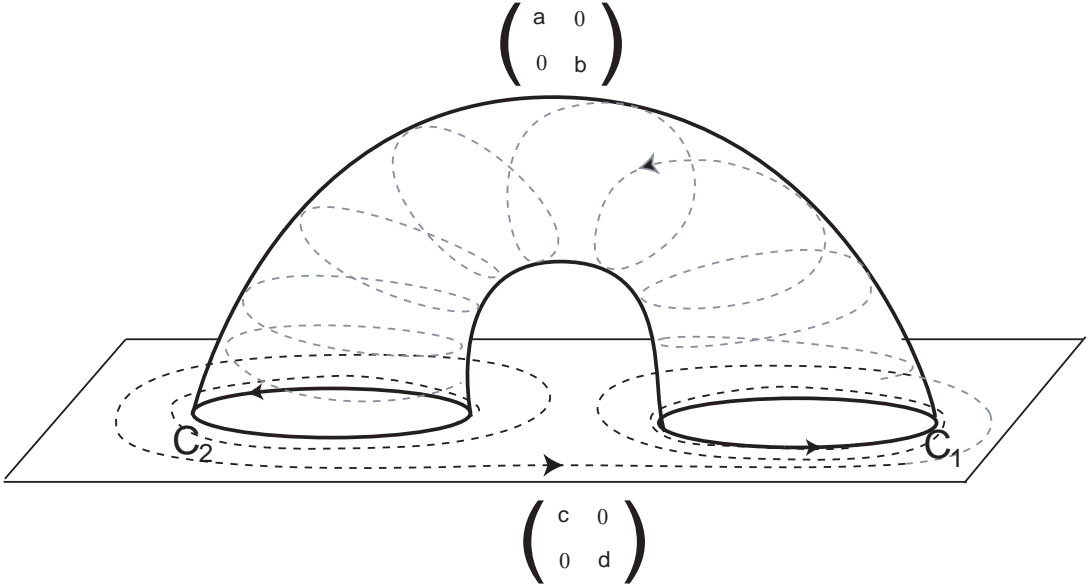}
\end{center}
\caption{\small Linear components of the global maps.}
\label{matrizes1}
\end{figure}

\medskip

\subsection{The first return map to $\Inn(\C_2)$}\label{sse:1st return map}

Given an initial condition $(R_2\pm \varepsilon, \theta, z) \in \Inn^+(\C_2)$, its trajectory returns to $\Inn^+(\C_2)$, thus defining a first return map
\begin{equation}\label{first return}
\mathcal{F}_2 := \Psi_{12}^+\circ\Phi_1^+ \circ \Psi_{21}^+\circ\Phi_2^+ \colon \quad  \Inn^+(\C_2) \,\, \rightarrow \,\, \Inn^+(\C_2)
\end{equation}
which is as smooth as the vector field $f$ and acts as\\
\begin{equation}
\label{first_return1}
\mathcal{F}_2(R_2 \pm \varepsilon, \theta, z) = \left(R_2 \pm \varepsilon, \Theta, \, Z \right),
\end{equation}
where
\begin{eqnarray*}
\Theta &=&  {ac}\, \theta - \left[\frac{a c\,\, \omega_1 E_1 + a\,\,\omega_1 \, C_2}{E_1\,E_2}\right] \log \,\left(\frac{z}{\varepsilon}\right) - \frac{a\, \omega_1}{E_1}\log d \qquad \text{mod} \, 2\pi\\
Z &=&  b\,\, \varepsilon \, \, d^{\delta_1}\left(\frac{z}{\varepsilon}\right)^\delta.
\end{eqnarray*}

\medskip

If $s_1(X)$ stands for the time needed for the orbit starting at $X \in \Out(\C_2)$ to hit $\Inn(\C_1)$ (see Figure~\ref{global}) and we choose the cross sections $\Out(\C_2)$ and $\Inn(\C_1)$ small enough, then the interval $[s_{\text{min}}, s_{\text{max}}]$ is arbitrarily small, where
\begin{eqnarray*}
s_{\text{min}} &=& \min \, \Big\{s_1(X) \colon \, \,  X \in \Out(\C_2)\cap W^u(\C_2)\Big\} \\
s_{\text{max}} &=& \max \, \Big\{s_1(X) \colon \, \,  X \in \Out(\C_2)\cap W^u(\C_2)\Big\}.
\end{eqnarray*}
Notice that these extreme values exist since $\Out(\C_2)\cap W^u(\C_2)$ is compact. Therefore, there is $M_1>0$ such that $0\leq s_1 (X)\leq M_1$ for all $X \in \Out(\C_2)$. Analogously, we define $s_2(X)$ as the time needed for the orbit starting at $X \in \Out(\C_1)$ to hit $\Inn(\C_2)$. Using the same argument, we may find  $M_2>0$ such that $0 \leq s_2(X) \leq M_2$ for all $X \in \Out(\C_1)$. Let $M =\max\, \{M_1, M_2\}$. We remark that, for each initial condition $X_0 \in \mathfrak{B}(\mathcal{H})$, the time spent by the piece of the trajectory $\{\varphi(t,\,X_0)\colon \, t \in \,[0,\mathrm{T}]\}$ inside $V_1 \cup V_2$ goes to infinity as $\mathrm{T} \to +\infty$, while both transition times $s_1$ and $s_2$ during its sojourn outside $V_1 \cup V_2$ remain uniformly bounded.

\begin{figure}[h]
\begin{center}
\includegraphics[height=7cm]{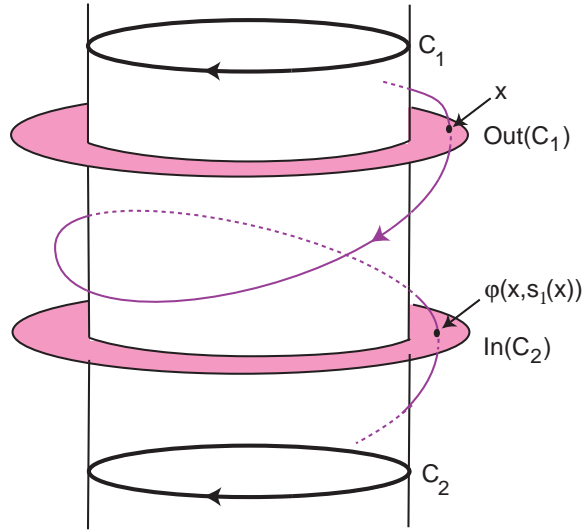}
\end{center}
\caption{\small Scheme for the global transition.}
\label{global}
\end{figure}

\section{Hitting times}\label{se:hitting-times}

In this section we will obtain estimates of the amount of time a trajectory spends between consecutive isolating neighborhoods of the periodic solutions. To simplify the computations, we may re-scale the local coordinates in order to assume that $\varepsilon=1$.

As a trajectory approaches $\mathcal{H}$, it visits a neighborhood of $\C_1$, then moves off towards a neighborhood of $\C_2$, comes back to the proximity of $\C_1$, and so on. During each turn it spends a geometrically increasing period of time in the small neighborhoods of the periodic solutions. More precisely, starting at the time $t_0$ (which we may assume equal to $0$) with the initial condition $(\rho_0, \theta_0, 1) \in \Out^+(\C_2)$, its orbit hits $\Out^+(\C_1)$ after a time interval equal to
\begin{equation}\label{eq:t1}
t_1 = s_1(\rho_0, \theta_0, 1) - \,\frac{1}{E_1} \log \,\left(d\,|\rho_0-R_2|\right)
\end{equation}
at the point in $\Out^+(\C_1)$ whose cylindrical coordinates are
\begin{eqnarray*}
(\rho_1, \, \theta_1, \,1) &=& (\Phi^+_{1} \circ \Psi^+_{2  1}) (\rho_0,\,\theta_0, \,1) = \Phi^+_{1} \left(R_1 + 1, \,\, c\,\theta_0, \,\, d\left(\rho_0 - R_2\right)\right) \\ \\
&=& \left(R_1 + \left[d(\rho_0-R_2)\right]^{\delta_1},\,\,c\,\theta_0 -\frac{\omega_1}{E_1}\log \left[d(\rho_0-R_2)\right],\,\,1\right) \quad \quad \text{if $\,\,\rho_0 > R_2$}; \\ \\
(\rho_1, \, \theta_1, \,1) &=& (\Phi^+_{1} \circ \Psi^+_{2  1}) (\rho_0,\,\theta_0, \,1) = \Phi^+_{1} \left(R_1 - 1, \,\, c\,\theta_0, \,\, d\left(R_2 - \rho_0\right)\right) \\ \\
&=& \left(R_1 - \left[d(R_2 -\rho_0)\right]^{\delta_1},\,\,c\,\theta_0 -\frac{\omega_1}{E_1}\log \left[d(R_2 - \rho_0)\right],\,\,1\right) \quad \quad \text{if $\,\,\rho_0 < R_2$}.
\end{eqnarray*}
Then, the orbit goes to $\Inn^+(\C_2)$ and proceeds to $\Out^+(\C_2)$, hitting the point
$$(\rho_2, \, \theta_2, \,1) = (\Phi^+_{2}\circ \Psi^+_{12})(\rho_1, \, \theta_1, \,1) $$
in $\Out^+(\C_2)$, where
\begin{eqnarray*}
\rho_2 &=& R_2 \pm b^{\delta_2}\,\left[d|\rho_0-R_2|\right]^\delta, \\
\theta_2 &=& {ac}\, \theta_0 - \left[\frac{a \, \omega_1\, E_2 + \omega_2 \, C_1}{E_1\,E_2}\right] \log \,{|\rho_0-R_2|}  -  \left[\frac{a \,\omega_1 E_2 + \omega_2 \, C_1}{E_1\,E_2}\right] \log d - \frac{\omega_2}{E_2}\log b \quad \text{mod} \, 2\pi,\\
\end{eqnarray*}
and spending in the whole path a time equal to
\begin{eqnarray}\label{eq:t2}
t_2 &=& t_1 + s_2(\rho_1, \, \theta_1, \,1) + \left(-\,\frac{1}{E_2} \log \,\left(b\,|\rho_1-R_1|\right)\right) \\
&=& t_1 + s_2(\rho_1, \, \theta_1, \,1) -\, \frac{1}{E_2}\log b - \, \frac{\delta_1}{E_2} \log d - \, \frac{\delta_1}{E_2} \log \,(|\rho_0-R_2|) . \nonumber
\end{eqnarray}
And so on for the other time values.

\section{The invariants}\label{se:invariants}

Now we will examine how the hitting times sequences generate the set of invariants we are looking for. Starting with a point $P_0:=(\rho_0, \theta_0,1) \in \Out^+(\C_2)$ at the time $t_0=0$ (notice that $P_0 \in \mathcal{B(H)}\backslash \mathcal{H}$), we consider the sequences of times $\left(t_j\right)_{j\,\in\,\mathbb{N}}$ constructed in the previous section and define, for each $i \in \NN_0=\NN\cup\{0\}$, the sequences of points and transition times\\

\begin{equation}\label{eq:times}
\left\{\begin{array}{l}
P_{2i}:=\varphi\left(t_{2i} , \,\,P_0\right)=(\rho_{2i},\,\,\theta_{2i}, \,\,1) \quad \in\,\, \Out^+(\C_2) \medskip \\
s_{2i+1}:=s_{2i+1}(P_0)= s_1(P_{2i}) \medskip \\
P_{2i+1}:=\varphi\left(t_{2i+1}, \,\,P_0\right)=(\rho_{2i+1},\,\,\theta_{2i+1}, \,\,1) \quad \in\,\, \Out^+(\C_1) \medskip \\
s_{2i+2}= s_{2i+2}(P_1)= s_2(P_{2i+1}).
\end{array}
\right.
\end{equation}
\bigskip

The trajectory $(t \,\in\, \RR^+_0 \,\to\, \varphi(t,P_0))$ is partitioned into periods of time corresponding either to its sojourns inside $V_{1}$ and along the connection $[\C_2 \rightarrow \C_1]$  (that is, the differences ${t_{2i+1}-t_{2i}}$ for $i \in \NN_0$) or inside $V_{2}$ and along the the connection $[\C_1\rightarrow \C_2]$ (that is, ${t_{2i+2}-t_{2i+1}}$ for $i \in \NN_0$) during its travel that begins and ends at $\Out^+(\C_2)$.

\begin{lemma}\label{le:calculus} Let $P_0=(\rho_0, \theta_0,1)$ be a point in $\Out^+(\C_2)$ and take the corresponding sequence $(t_j)_{j\,\in\, \NN_0}$. Then:
\begin{enumerate}
\item $(t_{2i+1}-t_{2i})-\gamma_2\,(t_{2i}-t_{2i-1}) = -\,\frac{1}{E_1}\log d + (s_{2i+1} - \gamma_2\,s_{2i}).$
\smallbreak
\item $(t_{2i+2}-t_{2i+1})-\gamma_1\,(t_{2i+1}-t_{2i}) = -\,\frac{1}{E_2}\log b + (s_{2i+2} - \gamma_1\,s_{2i+1}).$
\smallbreak
\item $(t_{2i+2}-t_{2i})- \delta\,(t_{2i}-t_{2i-2}) = -\tau_1\log d  -\tau_2 \log b + (s_{2i+2} + s_{2i+1}) - \delta \,(s_{2i} + s_{2i-1})$.
\medbreak
\end{enumerate}
\end{lemma}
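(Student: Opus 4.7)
The plan is to obtain the three identities by direct computation from the hitting-time formulas \eqref{eq:t1}--\eqref{eq:t2}, extended to arbitrary $i$, combined with the geometric recursion satisfied by the radial coordinates $\rho_j$. First I would observe that since the dynamics between $\Out^+(\C_2)$ and $\Out^+(\C_1)$ (respectively $\Out^+(\C_1)$ and $\Out^+(\C_2)$) is obtained by composing a transition map with a local map of the same form used to derive $t_1$ and $t_2$, the identical derivation with $P_{2i}$ (resp.\ $P_{2i+1}$) in place of $P_0$ (resp.\ $P_1$) yields
\begin{eqnarray*}
t_{2i+1}-t_{2i} &=& s_{2i+1} \;-\; \tfrac{1}{E_1}\log d \;-\; \tfrac{1}{E_1}\log|\rho_{2i}-R_2|, \\
t_{2i+2}-t_{2i+1} &=& s_{2i+2} \;-\; \tfrac{1}{E_2}\log b \;-\; \tfrac{1}{E_2}\log|\rho_{2i+1}-R_1|.
\end{eqnarray*}
From the explicit form of $\Phi_j^+\circ\Psi_{kj}^+$ I also record the radial recursions $|\rho_{2i+1}-R_1|=[d\,|\rho_{2i}-R_2|]^{\delta_1}$ and $|\rho_{2i+2}-R_2|=[b\,|\rho_{2i+1}-R_1|]^{\delta_2}$, whose logarithms will let me convert between consecutive $\rho$-coordinates.

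For item (1) I would substitute $\log|\rho_{2i}-R_2|=\delta_2\log b+\delta_2\log|\rho_{2i-1}-R_1|$ into the formula for $t_{2i+1}-t_{2i}$ and then form the difference with $\gamma_2(t_{2i}-t_{2i-1})$. The key arithmetic point is the identity $\gamma_2/E_2=\delta_2/E_1=C_2/(E_1E_2)$, which forces the coefficients of $\log|\rho_{2i-1}-R_1|$ and of $\log b$ to cancel simultaneously; only $s_{2i+1}-\gamma_2 s_{2i}-\tfrac{1}{E_1}\log d$ survives. Item (2) is entirely symmetric, using $\gamma_1/E_1=\delta_1/E_2=C_1/(E_1E_2)$.

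For item (3) I would first add the expressions for $t_{2i+2}-t_{2i+1}$ and $t_{2i+1}-t_{2i}$, after replacing $\log|\rho_{2i+1}-R_1|$ by $\delta_1\log d+\delta_1\log|\rho_{2i}-R_2|$, to obtain
\begin{equation*}
t_{2i+2}-t_{2i} \;=\; s_{2i+1}+s_{2i+2} \;-\; \tfrac{1}{E_2}\log b \;-\; \tau_1\log d \;-\; \tau_1\log|\rho_{2i}-R_2|,
\end{equation*}
the coefficient of $\log|\rho_{2i}-R_2|$ being exactly $\tfrac{1}{E_1}+\tfrac{\delta_1}{E_2}=\tau_1$ by the definition in Section~\ref{sse:constants}. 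Writing the analogous formula with index $i-1$, multiplying by $\delta$, and subtracting, I can use the second-order recursion $\log|\rho_{2i}-R_2|=\delta_2\log b+\delta\log d+\delta\log|\rho_{2i-2}-R_2|$ (obtained by iterating the two radial recursions). The $\log|\rho_{2i-2}-R_2|$ terms then cancel, and the remaining constants collapse to $-\tau_1\log d-\tau_2\log b$ once one verifies the algebraic identity $\tfrac{\delta-1}{E_2}-\tau_1\delta_2=-\tau_2$, which follows directly from $\delta=C_1C_2/(E_1E_2)$ and the definitions of $\tau_1,\tau_2$.

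The only obstacle is bookkeeping: every cancellation depends on the precise definitions of $\gamma_j,\delta_j,\tau_j$ and on the relations $\delta=\gamma_1\gamma_2=\delta_1\delta_2$, $\tau_j E_j = 1+\gamma_j$. There is no conceptual subtlety, but the arithmetic for item (3) (specifically the reduction to $-\tau_2\log b$) is where a careless sign would most easily propagate, so I would isolate and verify that identity first before writing the final display.
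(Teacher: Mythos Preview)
Your proposal is correct and follows essentially the same route as the paper's proof: both derive the time-difference formulas from \eqref{eq:t1}--\eqref{eq:t2}, use the radial recursions $|\rho_{2i+1}-R_1|=[d\,|\rho_{2i}-R_2|]^{\delta_1}$ and $|\rho_{2i+2}-R_2|=[b\,|\rho_{2i+1}-R_1|]^{\delta_2}$, and obtain the cancellations via the identities $\gamma_2/E_2=\delta_2/E_1$, $\gamma_1/E_1=\delta_1/E_2$, and $\tfrac{1}{E_2}+\tau_1\delta_2-\tfrac{\delta}{E_2}=\tau_2$. The only cosmetic difference is that for item (3) the paper expands $t_{2i+2}-t_{2i}$ all the way down to $|\rho_{2i-2}-R_2|$ before subtracting, whereas you first write it in terms of $|\rho_{2i}-R_2|$ and then invoke the second-order recursion; the arithmetic is identical.
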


\begin{proof}
Firstly, recall from \eqref{eq:t1} and \eqref{eq:t2} that
\begin{eqnarray*}
t_{2i} - t_{2i-1} &=& -\frac{1}{E_2} \log\,\left(b\, |\rho_{2i-1} - R_1|\right) + s_{2i}\\
t_{2i+1} - t_{2i} &=& -\frac{1}{E_1} \log\,\left(d\, |\rho_{2i} - R_2|\right) + s_{2i+1}.
\end{eqnarray*}
Besides, one has
\begin{eqnarray*}
t_{2i+1} - t_{2i} &=& -\frac{1}{E_1} \log\,\left(d\, |\rho_{2i} - R_2|\right) + s_{2i+1} = -\frac{1}{E_1} \log\,\left[d\, \Big(b\,|\rho_{2i-1}-R_1|\Big)^{\delta_2}\right] + s_{2i+1}\\
&=& -\frac{1}{E_1} \log d - \frac{\delta_2}{E_1} \log b - \frac{\delta_2}{E_1} \log\,\left( |\rho_{2i-1}-R_1|\right) + s_{2i+1}.
\end{eqnarray*}
Therefore,
\begin{eqnarray*}
&& (t_{2i+1}-t_{2i})-\gamma_2\,(t_{2i}-t_{2i-1}) = (t_{2i+1}-t_{2i})-\frac{C_2}{E_1}\,({t_{2i}-t_{2i-1}}) \\
&=& -\frac{1}{E_1} \log d - \frac{\delta_2}{E_1} \log b - \frac{\delta_2}{E_1} \log\,\left( |\rho_{2i-1}-R_1|\right) + s_{2i+1} - \\
&-& \frac{C_2}{E_1}\, \left[-\frac{1}{E_2} \log b - \frac{1}{E_2} \log\,\left(|\rho_{2i-1} - R_1|\right) + s_{2i}\right] \\
&=& -\frac{1}{E_1} \log d - \frac{\delta_2}{E_1} \log b  + \frac{C_2}{E_1}\,\frac{1}{E_2} \log b   + \Big(s_{2i+1} - \frac{C_2}{E_1}\,s_{2i}\Big) \\
&=& -\frac{1}{E_1} \log d + \Big(s_{2i+1} - \frac{C_2}{E_1}\,s_{2i}\Big).
\end{eqnarray*}
The proof of item (2) of the lemma is similar. Concerning item (3), we start evaluating $t_{2i}-t_{2i-2}$ and $t_{2i+2}-t_{2i}$:
\begin{eqnarray*}
t_{2i}-t_{2i-2} &=&  -\frac{1}{E_2} \log \,(b\, |\rho_{2i-1} - R_1|) + s_{2i-1} - \frac{1}{E_1} \log \,(d\, |\rho_{2i-2} - R_2|) + s_{2i}\\
&=&  -\frac{1}{E_2} \log \,\left[b \, \Big(d \, |\rho_{2i-2}-R_2|\Big)^{\delta_1}\right] - \frac{1}{E_1} \log \,(d\, |\rho_{2i-2} - R_2|) + \Big(s_{2i} + s_{2i-1}\Big)\\
&=&  -\left(\frac{1}{E_1}  + \frac{\delta_1}{E_2} \right) \log d  -\frac{1}{E_2} \log b  -\left(\frac{1}{E_1}  + \frac{\delta_1}{E_2} \right) \log \,(|\rho_{2i-2}-R_2|) + \Big(s_{2i} + s_{2i-1}\Big) \\
&=& -\tau_1 \log d  -\frac{1}{E_2} \log b  -\tau_1 \log \,(|\rho_{2i-2}-R_2|) + \Big(s_{2i} + s_{2i-1}\Big);
\end{eqnarray*}
\medskip
\begin{eqnarray*}
t_{2i+2}-t_{2i} &=& -\tau_1 \log d  -\frac{1}{E_2} \log b  - \tau_1 \log \,(|\rho_{2i}-R_2|) + \Big(s_{2i+2} + s_{2i+1}\Big)\\
&=& -\tau_1 \log d  -\frac{1}{E_2} \log b  -\tau_1 \log \,\left[\left(b\, (d\, |\rho_{2i-2}-R_2|)^{\delta_1}\right)^{\delta_2}\right] + \Big(s_{2i+2} + s_{2i+1}\Big) \\
&=& -\tau_1 \log d  -\frac{1}{E_2} \log b  -\tau_1\delta_2 \log \,\left[b\, (d\, |\rho_{2i-2}-R_2|)^{\delta_1}\right] + \Big(s_{2i+2} + s_{2i+1}\Big) \\
&=& -\tau_1 \log d  - \left(\frac{1}{E_2} + \tau_1\delta_2\right) \log b - \tau_1\delta_1\delta_2 \log \,\left(d\, |\rho_{2i-2}-R_2|\right) + \Big(s_{2i+2} + s_{2i+1}\Big)\\
&=&  -\tau_1(1+\delta) \log d  - \left(\frac{1}{E_2} + \tau_1\delta_2\right) \log b  - \tau_1\delta \log \,\left(|\rho_{2i-2}-R_2|\right) + \Big(s_{2i+2} + s_{2i+1}\Big).
\end{eqnarray*}
\bigskip
Finally, combining the two previous equalities, we obtain
\begin{eqnarray*}
&& (t_{2i+2}-t_{2i})- \delta\,(t_{2i}-t_{2i-2}) = \\
&=&  -\tau_1(1+\delta) \log d  -\left(\frac{1}{E_2} + \tau_1\delta_2\right) \log b  - \tau_1\delta \log \,\left(|\rho_{2i-2}-R_2|\right) \\
&& \quad \quad \quad + \,\,\tau_1\delta \log d  + \frac{\delta}{E_2} \log b  + \tau_1\delta \log \,(|\rho_{2i-2}-R_2|) + \Big(s_{2i+2} + s_{2i+1}\Big) - \delta \,\Big(s_{2i} + s_{2i-1}\Big)\\
&=&  -\tau_1\log d  - \left(\frac{1}{E_2} + \tau_1\delta_2 - \frac{\delta}{E_2}\right) \log b  + \Big(s_{2i+2} + s_{2i+1}\Big) - \delta \,\Big(s_{2i} + s_{2i-1}\Big) \\
&=&  -\tau_1\log d - \frac{1}{E_2} \left(1+\gamma_2\right) \log b + \Big(s_{2i+2} + s_{2i+1}\Big) - \delta \,\Big(s_{2i} + s_{2i-1}\Big) \\
&=&  -\tau_1\log d  -\tau_2 \log b + \Big(s_{2i+2} + s_{2i+1}\Big) - \delta \,\Big(s_{2i} + s_{2i-1}\Big).
\end{eqnarray*}
\end{proof}

Taking into account that the sequences $(s_{2i})_{i\in \NN}$ and $(s_{2i-1})_{i\in \NN}$ are uniformly bounded, a straightforward computation gives additional information on the evolution of the quotients of the previous sequences, besides a connection between the return times sequences and the combinations
$\omega_1 + \gamma_1\,\omega_2$ and $\omega_2 + \gamma_2\,\omega_1$.

\begin{corollary}\label{cor:speed-of-convergence} $\,$
\begin{enumerate}
\item $\lim_{i \to +\infty} \,\frac{t_{2i+2}\,-\,t_{2i+1}}{t_{2i+1}\,-\,t_{2i}} = \gamma_1$.
\medskip
\item $\lim_{i \to +\infty} \,\frac{t_{2i+1}\,-\,t_{2i}}{t_{2i}\,-\,t_{2i-1}} = \gamma_2$.
\medskip
\item $\lim_{i \to +\infty} \,\frac{t_{2i+2}\,-\,t_{2i}}{t_{2i}\,-\,t_{2i-2}} = \delta$.
\medskip
\item $\lim_{i \to +\infty}\, \frac{\omega_1\,\left(t_{2i+1}\,-\,t_{2i}\right) \,+ \,\omega_2\,\left(t_{2i+2}\,-\,t_{2i+1}\right)}{t_{2i+2}\,-\,t_{2i}} = \left(\omega_1 + \gamma_1\,\omega_2\right)\,\frac{1}{\gamma_1 \,+\, 1}.$
\medskip
\item $\lim_{i \to +\infty}\, \frac{\omega_2\,\left(t_{2i}\,-\,t_{2i-1}\right) \,+ \,\omega_1\,\left(t_{2i+1}\,-\,t_{2i}\right)}{t_{2i+1}\,-\,t_{2i-1}} = \left(\omega_2 + \gamma_2\,\omega_1\right)\,\frac{1}{\gamma_2 \,+\, 1}.$
\end{enumerate}
\end{corollary}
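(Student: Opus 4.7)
The plan is to read all five limits directly off the three identities of Lemma~\ref{le:calculus}. Each of those identities expresses a linear combination of consecutive time differences as a uniformly bounded remainder: the constants $-\tfrac{1}{E_1}\log d$, $-\tfrac{1}{E_2}\log b$ and $-\tau_1\log d-\tau_2\log b$ are fixed, while every transition time sequence $(s_j)$ takes values in $[0,M]$. So, once I know that the denominators $t_{j+1}-t_j$ tend to $+\infty$, each of items (1)--(3) follows by dividing the appropriate identity of the lemma by the appropriate time difference and letting $i\to+\infty$.

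The only non-algebraic ingredient is therefore the divergence of the time differences. For this I would combine the explicit formula
\[
t_{2i+1}-t_{2i}=-\frac{1}{E_1}\log\bigl(d\,|\rho_{2i}-R_2|\bigr)+s_{2i+1}
\]
appearing in the proof of Lemma~\ref{le:calculus} with the fact that $P_{2i}\in\Out^+(\C_2)$ converges to $W^u(\C_2)\cap\Out^+(\C_2)=\{\rho=R_2\}$ as $i\to+\infty$, because $P_0$ lies in $\mathfrak{B}(\mathcal{H})$ and $\mathcal{H}$ is attracting. Hence $|\rho_{2i}-R_2|\to 0$, its logarithm diverges to $-\infty$, and so $t_{2i+1}-t_{2i}\to+\infty$. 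The symmetric argument, using $|\rho_{2i+1}-R_1|=\bigl(d\,|\rho_{2i}-R_2|\bigr)^{\delta_1}$, gives $t_{2i}-t_{2i-1}\to+\infty$; the remaining denominators $t_{2i+2}-t_{2i}$ and $t_{2i+1}-t_{2i-1}$ then diverge as sums of positive divergent terms. A quantitative rate, if ever needed, is provided by iterating $|\rho_{2i+2}-R_2|=b^{\delta_2}\bigl(d\,|\rho_{2i}-R_2|\bigr)^{\delta}$ with $\delta>1$, yielding double-exponential decay.

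With items (1)--(3) in hand, items (4) and (5) are purely algebraic. Setting $A_i=t_{2i+1}-t_{2i}$ and $B_i=t_{2i+2}-t_{2i+1}$, the ratio in (4) rewrites as
\[
\frac{\omega_1 A_i+\omega_2 B_i}{A_i+B_i}=\frac{\omega_1+\omega_2\,(B_i/A_i)}{1+(B_i/A_i)},
\]
so item (1) together with continuity of $x\mapsto(\omega_1+\omega_2 x)/(1+x)$ at $x=\gamma_1>0$ yields the stated limit. Item (5) follows in exactly the same way, with $B_i/A_i$ replaced by $A_i/C_i$, where $C_i=t_{2i}-t_{2i-1}$, and item (2) invoked at the end. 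The main (and essentially only) obstacle is the radial contraction $|\rho_{2i}-R_2|\to 0$; everything afterwards reduces to the quotient law for limits applied to bounded remainders.
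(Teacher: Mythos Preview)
Your proposal is correct and follows exactly the approach the paper intends: the paper itself does not spell out a proof but merely remarks that, since the sequences $(s_{2i})$ and $(s_{2i-1})$ are uniformly bounded, ``a straightforward computation'' yields the corollary from Lemma~\ref{le:calculus}. You have supplied precisely that computation, including the one point the paper leaves implicit (the divergence $t_{j+1}-t_j\to+\infty$, which the paper only alludes to at the end of Section~\ref{sse:1st return map}).
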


\bigskip

Observe that
$$\left(\omega_1 + \gamma_1\,\omega_2\right)\,\frac{1}{\gamma_1 \,+\, 1} - \left(\omega_2 + \gamma_2\,\omega_1\right)\,\frac{1}{\gamma_2 \,+\, 1} = \left(\omega_1 - \omega_2\right)\,\frac{1-\gamma_1\,\gamma_2}{(\gamma_1 \,+\,1)(\gamma_2 \,+\, 1)}$$
so, under assumption (P1), the invariants $\left(\omega_1 + \gamma_1\,\omega_2\right)\,\frac{1}{\gamma_1 \,+\, 1}$ and $\left(\omega_2 + \gamma_2\,\omega_1\right)\,\frac{1}{\gamma_2 \,+\, 1}$ are equal if and only if $\omega_1 = \omega_2$.

\medskip

From now on, and having in mind the assumption (P5) and the examples we are interested in (see Section \ref{Example}), we will assume that there exist $s_1\geq 0$ and $s_2\geq 0$ such that
\begin{equation}
\label{times1}
s_{2i+1}= s_1 \qquad \text{and} \qquad s_{2i}= s_2, \quad \quad \forall \,i\,\in \,\NN \quad \quad \forall\, P_0 \, \in \,\Out^+(\C_2).
\end{equation}
This way, using the previous computations, we may estimate the invariants we are looking for.

\begin{corollary}\label{cor:invariants2} $\,$
Let $P_0=(\rho_0, \theta_0,1)$ be a point in $\Out^+(\C_2)$ and take the corresponding times sequence $(t_i)_{i\in\, \NN_0}$. Then:
\begin{enumerate}
\item $\lim_{i \to +\infty} \, (t_{2i+1}-t_{2i})-\gamma_2\,(t_{2i}-t_{2i-1}) = -\,\frac{1}{E_1}\log d + (s_{1} - \gamma_2\,s_{2}).$
\smallbreak
\item $\lim_{i \to +\infty} \, (t_{2i+2}-t_{2i+1})-\gamma_1\,(t_{2i+1}-t_{2i}) = -\,\frac{1}{E_2}\log b + (s_{2} - \gamma_1\,s_{1}).$
\smallbreak
\item $\lim_{i \to +\infty} \, (t_{2i+2}-t_{2i})- \delta\,(t_{2i}-t_{2i-2}) = -\tau_1\log d  -\tau_2 \log b + (s_{2} + s_{1}) (1-\delta)$.
\smallbreak
\end{enumerate}
\end{corollary}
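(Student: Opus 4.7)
The plan is to apply Lemma~\ref{le:calculus} and then exploit hypothesis~\eqref{times1} to replace the transition-time sequences by constants. Under the assumption that $s_{2i+1}=s_1$ and $s_{2i}=s_2$ for every $i\in\NN$ (independently of the initial point $P_0\in\Out^+(\C_2)$), the right-hand sides of the three identities in Lemma~\ref{le:calculus} no longer depend on $i$, so the ``limits'' in the statement are in fact constant sequences.

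For item (1), the right-hand side of Lemma~\ref{le:calculus}(1) reduces to the constant $-\tfrac{1}{E_1}\log d + (s_1-\gamma_2 s_2)$, whence the left-hand side is independent of $i$ and the limit is precisely this value. Item (2) follows in exactly the same way from Lemma~\ref{le:calculus}(2). For item (3), I would start from Lemma~\ref{le:calculus}(3) and substitute $s_{2i+2}=s_{2i}=s_2$ together with $s_{2i+1}=s_{2i-1}=s_1$; the combination $(s_{2i+2}+s_{2i+1})-\delta\,(s_{2i}+s_{2i-1})$ then collapses to $(s_1+s_2)\,(1-\delta)$, yielding the stated expression $-\tau_1\log d -\tau_2\log b + (s_1+s_2)(1-\delta)$.

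There is essentially no technical obstacle: once Lemma~\ref{le:calculus} is in hand, the corollary is a direct substitution, and not even a genuine limiting argument is required. The only conceptual point worth emphasising is that under~\eqref{times1} the sequences on the left-hand sides are actually constant in $i$, so the limits trivially exist; the $\lim$ notation is retained because in Section~\ref{se:completeness} these expressions will have to be compared with the analogous hitting-time combinations produced by a topologically conjugate flow, whose transition times need not be constant but will be shown to be asymptotically so, in which case only the limiting values survive as invariants.
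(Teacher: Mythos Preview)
Your proposal is correct and matches the paper's approach: the paper does not give an explicit proof of Corollary~\ref{cor:invariants2}, treating it as an immediate consequence of Lemma~\ref{le:calculus} once assumption~\eqref{times1} replaces the transition-time sequences $(s_{2i+1})$ and $(s_{2i})$ by the constants $s_1$ and $s_2$. Your observation that the sequences are then actually constant (so the limits are trivial) is exactly the point.
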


\medskip

Thus, besides $\wp_1$, $\wp_2$, the values
\begin{eqnarray*}
\gamma_1 &\quad& \gamma_2 \\
\omega_1 + \gamma_1\omega_2 &\quad& \omega_2 + \gamma_2 \omega_1 \\
-\frac{1}{E_1} \log d + (s_1-\gamma_1 s_2) &\quad& -\frac{1}{E_2} \log b + (s_2-\gamma_2 s_1)
\end{eqnarray*}
are invariants under topological conjugacy. Notice that the invariant
$$-\tau_1 \log d  -\tau_2 \log b + (s_1 + s_2)(1-\delta)$$
may be rewritten as a combination of $-\,\frac{1}{E_1}\log d + (s_1 - \gamma_2\,s_2)$ and $-\,\frac{1}{E_2}\log b + (s_2 - \gamma_1\,s_1)$ with coefficients that are invariants as well. Indeed, summoning the links between the several constants listed in Subsection~\ref{sse:constants}, we deduce that
\begin{eqnarray*}
&&\left[-\,\frac{1}{E_1}\log d + (s_{1} - \gamma_2\,s_{2}) \right](1 + \gamma_1) + \left[-\,\frac{1}{E_2}\log b + (s_{2} - \gamma_1\,s_{1}) \right](1 + \gamma_2) \\
&=& -\frac{1}{E_1} \log d + s_1 - \gamma_2 \,s_2 -\frac{\gamma_1}{E_1} \log d + \gamma_1 \,s_1 -\gamma_1\,\gamma_2\, s_2 -\frac{1}{E_2} \log b + s_2  -\gamma_1\, s_1\\
&&  - \frac{\gamma_2}{E_2} \log b + \gamma_2\, s_2 - \gamma_1\,\gamma_2\, s_1 \\
&=& \Big(-\frac{1 + \gamma_1}{E_1}\Big) \log d + \Big(- \frac{1+\gamma_2}{E_2}\Big) \log b + \Big(s_1 + s_2\Big)\Big(1 - \gamma_1\,\gamma_2\Big) \\
&=& -\tau_1 \log d  - \tau_2 \log b + (s_1 + s_2)(1-\delta).
\end{eqnarray*}

\section{Completeness of the set of invariants}\label{se:completeness}

Let $f$ and $g$ be vector fields in $\mathfrak{X}^r_{\text{PtoP}}(\EU^3)$, $r \geq 2$, having a stable heteroclinic cycle associated to two periodic solutions. For a conjugacy between $f$ and $g$ to exist it is necessary that the conjugated orbits have hitting times sequences, with respect to fixed cross sections, that are uniformly close. Therefore, besides the numbers $\wp_1$ and $\wp_2$, which are well known to be invariants under conjugacy, the values $\gamma_1$, $\gamma_2$, $-\frac{1}{E_1} \,\log d + (s_1-\gamma_2 s_2)$, $-\frac{1}{E_2} \,\log b + (s_2-\gamma_1 s_1)$,  $\omega_1 + \gamma_1 \,\omega_2$ and $\omega_2 + \gamma_2 \,\omega_1$ are also invariants under topological conjugacy. We are left to prove that they form a complete set. The argument we will present was introduced by F. Takens in \cite{Takens94} while examining Bowen's example and, with some adjustments, used in \cite{CR2017} for a class of Bykov attractors.

Let $\wp_1$, $\,\wp_2$, $\,\gamma_1$, $\,\gamma_2$, $\,\omega_2 + \gamma_2 \omega_1$, $\,\omega_1 + \gamma_1\omega_2$, $\,-\frac{1}{E_1} \,\log d + (s_1-\gamma_2 s_2)$ and $\, - \frac{1}{E_2} \,\log b + (s_2-\gamma_1 s_1)$ be the invariants of $f$, and $\overline{\wp}_1$, $\,\overline{\wp}_2$, $\,\overline{\gamma}_1$, $\,\overline{\gamma}_2$,
$\,\overline{\omega}_1 + \overline{\gamma_1}\overline{\omega}_2$,
$\,\overline{\omega}_2 + \overline{\gamma}_2\overline{\omega}_1$, $\,-\frac{1}{\overline{E}_1} \,\log \overline{d} + (\overline{s}_1-\overline{\gamma}_2 \overline{s}_2)$ and $\, -\frac{1}{\overline{E}_2} \,\log \overline{b} + (\overline{s}_2-\overline{\gamma}_1 \overline{s}_1)$ the ones of $g$. Assume that they are pairwise equal. We are due to explain how these numbers enable us to construct a conjugacy between $f$ and $g$ in a neighborhood of the respective heteroclinic cycles $\mathcal{H}_f$ and $\mathcal{H}_g$.

\subsection{Takens' argument} We will start associating to $f$ and any point $P$ in a fixed cross section $\Sigma$  another point $\widetilde{P}$ whose $f-$trajectory has a sequence of hitting times (at a possibly different but close cross section $\widetilde{\Sigma}$) which is determined by, and uniformly close to, the hitting times sequence of $P$, but is easier to work with. This is done by slightly adjusting the cross section $\Sigma$ using the flow along the orbit of $P$. Afterwards, we need to find an injective and continuous way of recovering the orbits from the hitting times sequences. Repeating this procedure with $g$ we find a point $Q$ whose $g-$trajectory has hitting times at some cross section equal to the ones of $\widetilde{P}$. Due to the fact that the invariants of $f$ and $g$ are the same, the map that sends $P$ to $Q$ is the desired conjugacy.

\subsection{A sequence of adjusted hitting times}\label{sse:time2}
Fix $P = (\rho_0, \,\theta_0,\,z_0) \in \mathfrak{B}(\mathcal{H}_f)$ and let $\left(t_i\right)_{i \,\in\, \NN_0}$ be the times sequence defined in \eqref{eq:times}. We start defining, for each $i \in \NN_0$, a finite family of numbers
$$\widetilde{T}_0^{\,\,(i)},\,\, \widetilde{T}_1^{\,\,(i)},\,\,  \widetilde{T}_2^{\,\,(i)},\, \ldots,\, \widetilde{T}_i^{\,\,(i)}$$
satisfying the following properties
\begin{eqnarray}\label{eq:Ttil-ij}
\widetilde{T}_i^{\,\,(i)} &=& T_i = t_{2i+2}-t_{2i} \\
\widetilde{T}_j^{\,\,(i)} - \delta \,\widetilde{T}_{j-1}^{\,\,(i)} &=& -\tau_1 \, \log d-\tau_2 \, \log b +(1-\delta) (s_1+s_2)  \quad \quad \forall \, j \in \{1, 2, \ldots, i\}. \nonumber
\end{eqnarray}
\medskip
By finite induction, it is straightforward that, for every $i \, \in \, \NN$,

\begin{equation}\label{eq:T0i}
\widetilde{T}_0^{\,\,(i)}=\frac{T_i + (\sum_{j=0}^{i-1} \,\delta^j) \,  -\tau_1 \, \log d-\tau_2 \, \log b +(1-\delta) (s_1+s_2)}{\delta^i}.
\end{equation}
Therefore, using the argument of \cite{CR2017}, we may conclude that:

\begin{lemma}\label{le:calculus2} Let $P_0=(\rho_0, \theta_0,1)$ be a point in $\Out^+(\C_2)$ and take the corresponding sequence $(t_j)_{j\,\in\, \NN_0}$. Then, for each $i \in \NN$, there exists $J_i \in \mathbb{R}$ such that $\,\sum_{i=1}^\infty i\,|J_i|<\infty$ and
$$(t_{2i+2}-t_{2i})- \delta\,(t_{2i}-t_{2i-2})= -\tau_1\, \log b -\tau_2 \,\log d  +  J_i.$$
\medskip
In addition, for every $i \, \in \, \NN_0$, we have $\,\widetilde{T}_0^{\,\,(i+1)} - \,\widetilde{T}_0^{\,\,(i)} = \frac{J_{i+1}}{\delta^{i+1}}.$
\end{lemma}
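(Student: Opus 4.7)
My plan is to deduce both assertions from Lemma~\ref{le:calculus}(3) and the recursive construction (7.2)--(7.3) of the adjusted times $\widetilde{T}_j^{(i)}$. For the first identity, I would simply define
\begin{equation*}
J_i := \bigl[(t_{2i+2}-t_{2i}) - \delta(t_{2i}-t_{2i-2})\bigr] - \bigl(-\tau_1\log b - \tau_2\log d\bigr)
\end{equation*}
and invoke Lemma~\ref{le:calculus}(3) to express $J_i$ in terms of $\log d$, $\log b$, and the combination $(s_{2i+2}+s_{2i+1}) - \delta(s_{2i}+s_{2i-1})$. Under the constancy postulated in (6.9), which is in force by (P5), that last expression reduces to the single constant $(1-\delta)(s_1+s_2)$, so $J_i$ itself is independent of $i$; the summability $\sum_i i\,|J_i|<\infty$ then becomes a consistency check that collapses to requiring the reference value in the statement to match the actual asymptotic value of $T_i-\delta T_{i-1}$. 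In a slightly relaxed setting where transition times only converge asymptotically to $s_1,\,s_2$, I would instead establish geometric decay $|J_i|\le K\lambda^i$ with $\lambda<1$, arising from the exponential rate at which orbits approach $\mathcal{H}$ (controlled by the Floquet exponents $E_j,\,C_j$), and then $\sum_i i\,\lambda^i<\infty$ finishes the argument.

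For the second identity I would work directly from the closed form (7.3) for $\widetilde{T}_0^{(i)}$. Writing $C$ for the asymptotic value of $T_i-\delta T_{i-1}$, we have
\begin{equation*}
\widetilde{T}_0^{(i)} = \frac{T_i - C\sum_{j=0}^{i-1}\delta^j}{\delta^i}.
\end{equation*}
Subtracting, putting over the common denominator $\delta^{i+1}$, and telescoping the two geometric sums leaves a single surviving $-C$ in the numerator, yielding
\begin{equation*}
\widetilde{T}_0^{(i+1)} - \widetilde{T}_0^{(i)} = \frac{(T_{i+1}-\delta T_i) - C}{\delta^{i+1}} = \frac{J_{i+1}}{\delta^{i+1}},
\end{equation*}
where the last equality is precisely the first part of the lemma. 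This is a one-line algebraic manipulation once the recursion in (7.2) has been unrolled.

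The only conceptually delicate point is the summability $\sum_i i\,|J_i|<\infty$; the algebra of both identities is routine. Under the strict reading of (P4)--(P5) the $J_i$ are constant (and in fact zero, up to a sign/notation reconciliation of the reference constant in the statement), so the bound is essentially bookkeeping. Its real purpose surfaces downstream in Section~\ref{se:completeness}: the $i$-weighted summability forces $\{\widetilde{T}_0^{(i)}\}_{i\in\NN}$ to be a Cauchy sequence, so that $\lim_i \widetilde{T}_0^{(i)}$ exists and can serve as the asymptotic initial-hitting-time datum Takens' argument requires in order to assemble the conjugacy between $f$ and $g$.
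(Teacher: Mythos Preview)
Your proposal is correct and is essentially the argument the paper has in mind: the paper itself gives no self-contained proof of this lemma, simply pointing to \cite{CR2017}, and the computation there proceeds exactly as you describe --- define $J_i$ as the defect between $T_i-\delta T_{i-1}$ and the recursion constant $C$ appearing in \eqref{eq:Ttil-ij}, then unroll \eqref{eq:T0i} to get $\widetilde{T}_0^{(i+1)}-\widetilde{T}_0^{(i)}=(T_{i+1}-\delta T_i-C)/\delta^{i+1}=J_{i+1}/\delta^{i+1}$. You are also right that under the standing hypothesis \eqref{times1} the defect $J_i$ is identically zero and the entire construction of $\widetilde{T}_0$ becomes trivial (one has $\widetilde{T}_0^{(i)}=T_0$ for every $i$); the summability condition $\sum i|J_i|<\infty$ only carries content in the relaxed setting you sketch, where transition times merely converge to $s_1,s_2$, and there your geometric-decay argument is the correct one. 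Finally, your remark about ``sign/notation reconciliation'' is on target: for the two parts of the lemma to be mutually consistent the reference constant in the first display must be the $C$ of \eqref{eq:Ttil-ij}, namely $-\tau_1\log d-\tau_2\log b+(1-\delta)(s_1+s_2)$, rather than $-\tau_1\log b-\tau_2\log d$ as printed; this is a typo in the statement, and your derivation makes it visible.
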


\medskip

As $\delta>1$, the series $\sum_{j=1}^\infty \frac{J_j}{\delta^{j}} $ converges, and so the sequence $\left(\widetilde{T}_0^{\,\,(i)}\right)_{i \,\in\, \NN_0}$ converges. Denote its limit by $\widetilde{T}_0$:
\begin{equation}\label{T0}
\widetilde{T}_0:= \lim_{i \to + \infty}\, \widetilde{T}_0^{\,\,(i)} = T_0^{(0)} + \sum_{j=1}^\infty \frac{J_j}{\delta^{j}} = T_0 +   \sum_{j=1}^\infty \frac{J_j}{\delta^{j}}.
\end{equation}

Next, for $i \geq 1$, consider the sequence $(\widetilde{T}_i)_{i \,\in\, \NN_0}$ satisfying
\begin{equation}\label{eq:Ttil}
\widetilde{T}_i = \delta \, \widetilde{T}_{i-1} -\tau_1 \, \log d-\tau_2 \, \log b +(1-\delta) (s_1+s_2) \qquad \forall \, i \, \in \, \NN
\end{equation}
where $\widetilde{T}_0$ was computed in \eqref{T0}.

\begin{lemma}\cite{CR2017}
\label{le:convergence} The series $\sum_{i=0}^{+\infty} \, (T_i - \widetilde{T}_{i})$ converges and $\lim_{i \to +\infty} (T_i - \widetilde{T}_i) = 0.$
\end{lemma}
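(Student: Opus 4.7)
The plan is to recognize $D_i := T_i - \widetilde{T}_i$ as the solution of a linear first-order recurrence whose forcing term is absolutely summable, and then to read off both conclusions from a closed-form expression for $D_i$.

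First, I would subtract the defining recursion \eqref{eq:Ttil} for $\widetilde{T}_i$ from the expansion of $T_i - \delta\, T_{i-1}$ supplied by Lemma~\ref{le:calculus2}. The constant parts coincide and cancel in the difference, leaving
$$
D_i \,=\, \delta \, D_{i-1} + J_i, \qquad i \geq 1.
$$
Next, since $\widetilde{T}_0^{\,\,(0)} = T_0$ by \eqref{eq:Ttil-ij}, the definition \eqref{T0} of $\widetilde{T}_0$ gives
$D_0 \,=\, T_0 - \widetilde{T}_0 \,=\, -\sum_{j=1}^{\infty} J_j / \delta^j$,
a series that converges absolutely since $\delta > 1$ and $\sum_j |J_j| \leq \sum_j j\,|J_j| < \infty$. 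Iterating the recurrence yields $D_i = \delta^i D_0 + \sum_{k=1}^i \delta^{i-k} J_k$; substituting the expression for $D_0$, the terms with $1 \leq k \leq i$ cancel exactly, leaving the telescoped closed form
$$
D_i \,=\, -\sum_{j=i+1}^\infty \frac{J_j}{\delta^{j-i}}.
$$

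From this formula both conclusions of the lemma follow at once. Because $\delta > 1$, the estimate $|D_i| \leq \sum_{j > i} |J_j|$ exhibits $|D_i|$ as bounded by the tail of a convergent series, so $D_i \to 0$. For the summability of $\sum D_i$, a Tonelli-type interchange of the double sum gives
$$
\sum_{i \geq 0} |D_i| \,\leq\, \sum_{j \geq 1} |J_j| \sum_{k=1}^{j} \delta^{-k} \,\leq\, \frac{1}{\delta - 1} \sum_{j \geq 1} |J_j| \,<\, \infty,
$$
and absolute convergence implies ordinary convergence of $\sum_{i \geq 0} (T_i - \widetilde{T}_i)$.

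The only step requiring real care is the bookkeeping of constants: one must verify that the constant appearing in Lemma~\ref{le:calculus2} agrees, sign for sign, with the constant used in \eqref{eq:Ttil}, so that the cancellation producing $D_i = \delta D_{i-1} + J_i$ is clean. This is precisely why $\widetilde{T}_0$ is defined with the correction series \eqref{T0}: the initial datum is tuned so that the growing part $\delta^i D_0$ cancels the long-range portion of the iterated forcing, leaving only the absolutely summable tail displayed above. Beyond this constant-matching, the argument is routine, which is presumably why the authors simply attribute it to \cite{CR2017}.
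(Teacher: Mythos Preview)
Your argument is correct and is essentially the standard one that the citation to \cite{CR2017} is meant to invoke: the paper itself gives no proof here, and the derivation you outline---subtracting the two recursions to obtain $D_i=\delta D_{i-1}+J_i$, using the choice \eqref{T0} of $\widetilde{T}_0$ to kill the growing mode, and then bounding the resulting tail sum---is precisely the mechanism behind the result. Your caveat about matching the constants in Lemma~\ref{le:calculus2} and \eqref{eq:Ttil} is well taken (note in particular that the paper swaps $b$ and $d$ between Lemma~\ref{le:calculus}(3) and Lemma~\ref{le:calculus2}, and that the $(1-\delta)(s_1+s_2)$ piece must be absorbed into the definition of $J_i$ for the cancellation to be clean); once that bookkeeping is straightened out, your proof is complete.
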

Therefore, we may take a sequence $\left(\widetilde{t}_{2i}\right)_{i \, \in \, \NN_0}$ of positive real numbers such that
\begin{eqnarray}\label{eq:times-even-til}
\widetilde{t}_{0} &=& 0  \nonumber\\
\widetilde{T}_i &=& \widetilde{t}_{2i+2}- \widetilde{t}_{2i} \nonumber\\
\lim_{i\, \to \, + \infty}\, (t_{2i} - \widetilde{t}_{2i}) &=& 0.
\end{eqnarray}
Moreover, by construction (see \eqref{eq:Ttil}) we have
\begin{equation}\label{eq:tau log a}
(\widetilde{t}_{2i+2}- \widetilde{t}_{2i}) - \delta\, (\widetilde{t}_{2i}- \widetilde{t}_{2i-2}) = -\tau_1 \, \log d-\tau_2 \, \log b +(1-\delta) (s_1+s_2).
\end{equation}
After defining the sequences of even indices, we take a sequence $\left(\widetilde{t}_{2i+1}\right)_{i \, \in \, \NN_0}$ satisfying, for every $i \, \in \, \NN_0$,
\begin{equation}\label{eq:times-odd-til}
\widetilde{t}_{2i+2}- \widetilde{t}_{2i+1} = \gamma_1 \, (\widetilde{t}_{2i+1} - \widetilde{t}_{2i}) - \frac{1}{E_2} \log b + (s_2 - \gamma_1 s_1).
\end{equation}

\begin{lemma}[\cite{CR2017}]\label{le:prop-t-til} $\,$
\begin{enumerate}
\item $\lim_{i\, \to \, + \infty}\, (t_{2i+1} - \widetilde{t}_{2i+1}) = 0.$ \\
\item $\lim_{i \to +\infty} \,(\widetilde{t}_{2i+1} - \widetilde{t}_{2i}) - \gamma_2\,(\widetilde{t}_{2i} - \widetilde{t}_{2i-1}) = -\,\frac{1}{E_1}\log d + (s_1 - \gamma_2 \,s_2).$\\
\item $\lim_{i \to +\infty} \,(\widetilde{t}_{2i+2} - \widetilde{t}_{2i+1}) - \gamma_1\,(\widetilde{t}_{2i+1} - \widetilde{t}_{2i}) = -\,\frac{1}{E_2}\log b + (s_2 - \gamma_1 \, s_1).$\\
\end{enumerate}
\end{lemma}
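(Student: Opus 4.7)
The plan rests on the observation that the defining recursion~\eqref{eq:times-odd-til} for the odd-indexed auxiliary sequence $\bigl(\widetilde{t}_{2i+1}\bigr)$ was engineered precisely so as to match the exact affine relation obeyed by the true hitting times $(t_{2i+1})$ coming from Lemma~\ref{le:calculus}(2) under the standing assumption~\eqref{times1} that the transition times $s_{2i+1}$ and $s_{2i}$ are the constants $s_1$ and $s_2$. Consequently, item~(3) is immediate: it is nothing other than~\eqref{eq:times-odd-til} itself, which is an identity valid for every $i\in\NN_0$, so the ``limit'' is attained pointwise. I would therefore dispose of the items in the order (3), (1), (2).

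For item~(1) I would invert~\eqref{eq:times-odd-til} to obtain
$$\widetilde{t}_{2i+1} \;=\; \frac{\widetilde{t}_{2i+2} + \gamma_1\,\widetilde{t}_{2i}}{1+\gamma_1} + \kappa,$$
where $\kappa$ is a fixed constant depending only on $b,E_2,s_1,s_2,\gamma_1$. By Lemma~\ref{le:calculus}(2) combined with~\eqref{times1}, the true sequence satisfies the same affine relation, so $t_{2i+1}$ admits the identical closed form with the same constant $\kappa$. Subtracting cancels $\kappa$ and yields
$$t_{2i+1} - \widetilde{t}_{2i+1} \;=\; \frac{(t_{2i+2}-\widetilde{t}_{2i+2}) + \gamma_1\,(t_{2i}-\widetilde{t}_{2i})}{1+\gamma_1},$$
which tends to $0$ since $t_{2i}-\widetilde{t}_{2i}\to 0$ by Lemma~\ref{le:convergence}. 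This proves (1).

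For item~(2), Lemma~\ref{le:calculus}(1) combined with~\eqref{times1} already supplies the exact identity
$$(t_{2i+1}-t_{2i}) - \gamma_2\,(t_{2i}-t_{2i-1}) \;=\; -\tfrac{1}{E_1}\log d + (s_1-\gamma_2 s_2)$$
for every $i\in\NN$. I would then rewrite the expression whose limit is requested as this constant plus the perturbation
$$\bigl[(\widetilde{t}_{2i+1}-t_{2i+1}) - (\widetilde{t}_{2i}-t_{2i})\bigr] \;-\; \gamma_2\,\bigl[(\widetilde{t}_{2i}-t_{2i}) - (\widetilde{t}_{2i-1}-t_{2i-1})\bigr],$$
each of whose four constituent terms vanishes as $i\to\infty$ by item~(1) and Lemma~\ref{le:convergence}. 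This yields~(2).

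The whole argument is essentially algebraic, and the only mild subtlety is the ordering: item~(2) requires the odd-index convergence $t_{2i-1}-\widetilde{t}_{2i-1}\to 0$, which is not part of Lemma~\ref{le:convergence} and has to be furnished first by item~(1). No new estimate is needed beyond the bounded-remainder information already packaged in Lemmas~\ref{le:calculus} and~\ref{le:convergence}; the harder analytic content of the paper has been entirely absorbed into the convergence $t_{2i}-\widetilde{t}_{2i}\to 0$.
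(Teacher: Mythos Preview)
Your argument is correct. The paper itself does not supply a proof of this lemma; it simply cites \cite{CR2017}, so there is no in-paper proof to compare against. Your derivation is exactly the elementary algebraic verification one would expect: item~(3) is literally the defining relation~\eqref{eq:times-odd-til}; item~(1) follows because under the standing assumption~\eqref{times1} Lemma~\ref{le:calculus}(2) makes the true odd hitting times satisfy the \emph{same} affine relation, so the difference $t_{2i+1}-\widetilde{t}_{2i+1}$ is a convex combination of $t_{2i}-\widetilde{t}_{2i}$ and $t_{2i+2}-\widetilde{t}_{2i+2}$, both of which vanish by~\eqref{eq:times-even-til}; and item~(2) is then Lemma~\ref{le:calculus}(1) plus a null perturbation. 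One small remark on citations: the convergence $t_{2i}-\widetilde{t}_{2i}\to 0$ that you invoke is recorded in~\eqref{eq:times-even-til} rather than in Lemma~\ref{le:convergence} itself (the latter concerns $T_i-\widetilde{T}_i$ and is what makes~\eqref{eq:times-even-til} possible), but this is a labeling issue, not a mathematical one.
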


As any solution of $f$ in $\mathfrak{B}(\mathcal{H}_f)$ eventually hits $\Out\,(\C_2)$, we may apply the previous construction to all the orbits of $f$ in $\mathfrak{B}(\mathcal{H}_f)$. So, given any $P_0 \in \mathfrak{B}(\mathcal{H}_f)$, we take the first non-negative hitting time of the forward orbit of $P_0$ at $\Out\,(\C_2)$, defined by
$$t_{\Sigma_2} (P_0) = \min \,\{t \in \RR^+_0 \colon \varphi(t,\,P_0) \in \Out\,(\C_2)\}.$$
As $\Out^+(\C_2)$ and $\Out^-(\C_2)$ are relative-open sets, this first-hitting-time map is continuous with $P_0$. Then, having fixed
$$P = \varphi(t_{\Sigma_2}(P_0),\,P_0) = \left(\rho_0,\,\theta_0, \pm 1\right) \in \Out\,(\C_2)$$
we consider its hitting times sequence $\left(t^{\,\,(P)}_{i}\right)_{i \, \in \, \NN_0}$
and build the sequence $\left(\widetilde{t}^{\,\,\,(P)}_{i}\right)_{i \, \in \, \NN_0}$ as explained in the previous section.

Adjusting the cross sections $\Sigma_1$ and $\Sigma_2$ if needed, we now find a point $\widetilde{P} \in \Out\,(\C_2)$ in the $f-$trajectory of $P$ whose hitting times sequence is precisely $\left(\widetilde{t}^{\,\,\,(P)}_{i}\right)_{i \, \in \, \NN_0}$. Notice that the new cross sections are close to the previous ones since the sequences $\left(t_{i}\right)_{i \, \in \, \NN_0}$ and $\left(\widetilde{t}_{i}\right)_{i \, \in \, \NN_0}$ are uniformly close. We are left to show that there exists a continuous choice of such a trajectory with hitting times sequence $\left(\widetilde{t}^{\,\,\,(P)}_{i}\right)_{i \, \in \, \NN_0}$.

\subsubsection{\textbf{\emph{Coordinates of $\widetilde{P}$}}}\label{sse:special-point}

Given a sequence of times $\left(\widetilde{t}_i\right)_{i \, \in \, \NN_0}$ satisfying  $\widetilde{t}_0=0$ and the properties established in Lemma~\ref{le:prop-t-til}, \eqref{eq:times-even-til}, \eqref{eq:tau log a} and \eqref{eq:times-odd-til}, one may recover from its terms the coordinates of a point $(\rho_0, \,\theta_0, 1) \in \Out^+({\C}_2)$ whose $i$th hitting time is precisely $\widetilde{t}_i$. Firstly, we solve the equation (see \eqref{eq:t1})
\begin{equation}\label{eq:z}
\widetilde{t}_1 = -\,\frac{1}{{E}_1} \log \,({d}\, |\rho_0 - {R}_2|)+{s}_1
\end{equation}
obtaining $\rho_0$. Then, using \eqref{eq:t2}, we get
\begin{equation}\label{eq:rho}
\widetilde{t}_2 = \widetilde{t}_1 +{s}_2 -\frac{1}{{E}_2} \log   \,({b}\, |\rho_1 - {R}_1|)
\end{equation}
and compute $\rho_1$. And so on, getting from such a sequence of times all the values of the radial coordinates $\left(\rho_{2i+1}\right)_{i \, \in \, \NN_0}$ and $\left(\rho_{2i}\right)_{i \, \in \, \NN_0}$ of the successive hitting points at $\Out^+({\C}_1)$ and $\Out^+({\C}_2)$, respectively.

\medskip

Notice that the previous computations do not depend on the angular coordinate. That is why nothing has yet been disclosed about $\theta_0$ from them. Concerning the evolution in $\mathbb{R}^+$ of the angular coordinates, the spinning in average inside the cylinders is given, for every $i \in \mathbb{N}_0$, by
\begin{eqnarray}\label{eq:spin}
\frac{\theta_{2i+2} - c\,\theta_{2i}}{\widetilde{t}_{2i+2}-\widetilde{t}_{2i}} &=& \frac{(\theta_{2i+2} - a\,\theta_{2i+1}) + (a\,\theta_{2i+1} - c\,\theta_{2i})}{\widetilde{t}_{2i+2}-\widetilde{t}_{2i}} \nonumber\\
&=& \frac{\omega_2\,(\widetilde{t}_{2i+2} - \widetilde{t}_{2i+1}) + \omega_1\,(\widetilde{t}_{2i+1} - \widetilde{t}_{2i})}{\widetilde{t}_{2i+2}-\widetilde{t}_{2i}} \nonumber\\
&=& \frac{\omega_1 + \gamma_1\, \omega_2}{\gamma_1 + 1}
\end{eqnarray}
(cf. Corollary~\ref{cor:speed-of-convergence}). Moreover, Lemma~\ref{le:prop-t-til} indicates that
$$\frac{\theta_{2i+1} - c\,\theta_{2i}}{\theta_{2i+2}- a\,\theta_{2i+1}} = \frac{\omega_1\,(\widetilde{t}_{2i+1} - \widetilde{t}_{2i})}{\omega_2\,\left(\widetilde{t}_{2i+2} - \widetilde{t}_{2i+1}\right)} = \frac{\omega_1}{\gamma_1 \,\omega_2}.$$
So
\begin{eqnarray*}
\theta_{2i+2} - \theta_{2i} &=& (\theta_{2i+2} - a\,\theta_{2i+1}) + \left(a\, \theta_{2i+1} - a\, c\,\theta_{2i}\right) + \left(a\,c - 1\right)\,\theta_{2i}\\
&=& (\theta_{2i+2} - a\,\theta_{2i+1})\,\left(\frac{a\,\omega_1}{\gamma_1 \,\omega_2} + 1\right) + \left(a\,c - 1\right)\,\theta_{2i}\\
&=& \omega_2\,(\widetilde{t}_{2i+2} - \widetilde{t}_{2i+1})\,\left(\frac{a\,\omega_1}{\gamma_1 \,\omega_2} + 1\right) + \left(a\,c - 1\right)\,\theta_{2i} \\
&=& \frac{a\,\omega_1 + \gamma_1\, \omega_2}{\gamma_1}\,\,(\widetilde{t}_{2i+2} - \widetilde{t}_{2i+1}) + \left(a\,c - 1\right)\,\theta_{2i}.
\end{eqnarray*}
On the other hand, from \eqref{eq:spin} we get
\begin{eqnarray*}
\theta_{2i+2} - \theta_{2i} &=& \left(\theta_{2i+2} - c\,\theta_{2i}\right) + \left(c - 1\right)\,\theta_{2i} \\
&=& \frac{\omega_1 + \gamma_1\, \omega_2}{\gamma_1 + 1}\,\,(\widetilde{t}_{2i+2}-\widetilde{t}_{2i}) + \left(c - 1\right)\,\theta_{2i}.
\end{eqnarray*}
Consequently,
\begin{equation*}
\frac{a\,\omega_1 + \gamma_1\, \omega_2}{\gamma_1}\,(\widetilde{t}_{2i+2} - \widetilde{t}_{2i+1}) + \left(a\,c - 1\right)\,\theta_{2i} = \frac{\omega_1 + \gamma_1\, \omega_2}{\gamma_1 + 1}\,(\widetilde{t}_{2i+2}-\widetilde{t}_{2i}) + \left(c - 1\right)\,\theta_{2i}
\end{equation*}
or, equivalently,
\begin{eqnarray}\label{eq:theta}
\theta_{2i}\Big(c\,(a-1)\Big) &=& \frac{\omega_1 + \gamma_1\, \omega_2}{\gamma_1 + 1} \,\left(\widetilde{t}_{2i+2} - \widetilde{t}_{2i+1}\right) - \frac{a\,\omega_1 + \gamma_1\, \omega_2}{\gamma_1}\,\left(\widetilde{t}_{2i+2} - \widetilde{t}_{2i}\right).
\end{eqnarray}

Similar estimates show that

\begin{eqnarray}
\frac{\theta_{2i+1} - a\,\theta_{2i-1}}{\widetilde{t}_{2i+1}-\widetilde{t}_{2i-1}} &=& \frac{\omega_2 + \gamma_2\, \omega_1}{\gamma_2 + 1} \nonumber \\ \nonumber\\
\theta_{2i+1} - \theta_{2i-1} &=& \frac{\omega_2 + \gamma_2\, \omega_1}{\gamma_2 + 1}\,\,(\widetilde{t}_{2i+1}-\widetilde{t}_{2i-1}) + \left(a - 1\right)\,\theta_{2i-1} \nonumber \\ \nonumber\\
\theta_{2i+1} - \theta_{2i-1} &=& \frac{c\,\omega_2 + \gamma_2\, \omega_1}{\gamma_2}\,\,(\widetilde{t}_{2i+1} - \widetilde{t}_{2i}) + \left(a\,c - 1\right)\,\theta_{2i-1} \nonumber \\ \nonumber\\
\theta_{2i-1}\Big(a\,(c-1)\Big) &=& \frac{\omega_2 + \gamma_2\, \omega_1}{\gamma_2 + 1} \,\left(\widetilde{t}_{2i+1} - \widetilde{t}_{2i-1}\right) - \frac{c\,\omega_2 + \gamma_2\, \omega_1}{\gamma_2}\,\left(\widetilde{t}_{2i+1} - \widetilde{t}_{2i}\right).
\end{eqnarray}
\medskip

From these computations the angular coordinate $\theta_0$ is uniquely determined if and only if either $a \neq 1$, in which case
$$\theta_{0} = \Big(\frac{1}{c\,(a-1)}\Big) \, \Big[\frac{\omega_1 + \gamma_1\, \omega_2}{\gamma_1 + 1} \,\left(\widetilde{t}_{2} - \widetilde{t}_{1}\right) - \frac{a\,\omega_1 + \gamma_1\, \omega_2}{\gamma_1}\,\left(\widetilde{t}_{2} - \widetilde{t}_{0}\right)\Big]$$
or $c \neq 1$, in which case
$$\theta_{1} = \Big(\frac{1}{a\,(c-1)}\Big) \, \Big[\frac{\omega_2 + \gamma_2\, \omega_1}{\gamma_2 + 1} \,\left(\widetilde{t}_{3} - \widetilde{t}_{1}\right) - \frac{c\,\omega_2 + \gamma_2\, \omega_1}{\gamma_2}\,\left(\widetilde{t}_{3} - \widetilde{t}_{2}\right)\Big]$$
is known, from which $\theta_0$ is found iterating the flow backwards.

If $a = 1 = c$, we may evaluate $\theta_2 - \theta_0$, but all possible values $\theta_0 \in [0, 2\pi[$ are good choices for the angular coordinate. In particular, in this case, the invariants $\frac{\omega_1 + \gamma_1\, \omega_2}{1 + \gamma_1}$ and $\frac{\omega_2 + \gamma_2\, \omega_1}{1 + \gamma_2}$ are not used to construct the conjugacy.

\subsection{The conjugacy}
Consider linearizing neighborhoods of $\overline{\C_1}$ and $\overline{\C_2}$, the periodic solutions of $g$, and take a point $\overline{P}=(\rho_0,\theta_0,1) \in {\Out^+}(\overline{\C_2})$, the corresponding hitting times sequence $\left(t_i\right)_{i \, \in \, \NN_0}$ at cross sections ${\Inn^+(\overline{\C_1}})$ and $\Sigma_2$, and the sequence of times $\left(\widetilde{t}_i\right)_{i \, \in \, \NN_0}$ obtained in Subsection~\ref{sse:time2}.

As done for $f$ in Subsection~\ref{sse:special-point}, using estimates similar to \eqref{eq:z}, \eqref{eq:rho} and \eqref{eq:theta}, we now find for $g$ a unique point $Q_P$, given in local coordinates by $(\overline{\rho_0},\overline{\theta_0},1)$, where
\begin{eqnarray*}\label{eq:point-for-g}
\overline{\rho_0}&=& R_2 \,\pm \,\frac{e^{-(\tilde{t}_1  - \overline{s}_1)\,\overline{E}_1}}{d} \, \\
\overline{\theta_{0}} &=& \Big(\frac{1}{c\,(a-1)}\Big) \, \Big[\frac{\omega_1 + \gamma_1\, \omega_2}{\gamma_1 + 1} \,\left(\widetilde{t}_{2} - \widetilde{t}_{1}\right) - \frac{a\,\omega_1 + \gamma_1\, \omega_2}{\gamma_1}\,\left(\widetilde{t}_{2} - \widetilde{t}_{0}\right)\Big] \quad \quad \text{if $a \neq 1$}\\
\overline{\theta_{1}} &=& \Big(\frac{1}{a\,(c-1)}\Big) \, \Big[\frac{\omega_2 + \gamma_2\, \omega_1}{\gamma_2 + 1} \,\left(\widetilde{t}_{3} - \widetilde{t}_{1}\right) - \frac{c\,\omega_2 + \gamma_2\, \omega_1}{\gamma_2}\,\left(\widetilde{t}_{3} - \widetilde{t}_{2}\right)\Big] \quad \quad \text{if $c \neq 1$} \\
\overline{\theta_{0}} &=& \text{any value in $[0, 2\pi[$} \quad \quad \text{if $a = 1 = c$}.
\end{eqnarray*}
\medskip
The set of these points build cross sections $\overline{\Sigma_1}$ and $\overline{\Sigma_2}$ for $g$ at which the points $Q_P$ have the prescribed hitting times $\left(\widetilde{t}_i\right)_{i \, \in \, \NN_0}$ by the action of $g$. Next, we take the map
$$H \colon \,\,P \in \Sigma_2 \cap \Out^+(\C_2) \quad \mapsto \quad Q_P$$
and extend it using the flows $\varphi$ and $\overline{\varphi}$ of $f$ and $g$, respectively: for every $t \in \RR$, set $H(\varphi_t(P))=\overline{\varphi}_t(H(P)).$ An analogous construction is repeated for $\Out^-(\C_2)$.

\begin{lemma} [\cite{CR2017}]
$H$ is a conjugacy.
\end{lemma}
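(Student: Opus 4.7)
The plan is to verify the four properties of a topological conjugacy for $H$: well-definedness, continuity, bijectivity, and time-equivariance with respect to the flows $\varphi$ and $\overline{\varphi}$. Time-equivariance is built into the definition by extension along orbits; thus the work lies in checking that this extension is consistent, continuous, invertible, and that it can be carried across the heteroclinic cycle $\mathcal{H}_f$ itself.

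First I would verify that $H$ is unambiguously defined on $\mathfrak{B}(\mathcal{H}_f)\setminus\mathcal{H}_f$. Any point $x$ in this set returns infinitely often to $\Out(\C_2)$, but because the cross section is transverse to the flow, the first return time $t_{\Sigma_2}(x)$ is unique and depends continuously on $x$; writing $x=\varphi(t_{\Sigma_2}(x),P_x)^{-1}$ with $P_x\in \Sigma_2\cap\Out(\C_2)$ gives a well-defined assignment $x\mapsto (P_x,-t_{\Sigma_2}(x))$, and $H(x):=\overline{\varphi}(-t_{\Sigma_2}(x),Q_{P_x})$ is therefore unambiguous. The construction of $Q_P$ from the adjusted sequence $(\widetilde t_i^{(P)})_{i\in\NN_0}$ in Subsection~\ref{sse:time2} uses only the invariants of $f$ and $g$, which are assumed equal, so $Q_P$ inherits hitting times $(\widetilde t_i^{(P)})$ under $\overline{\varphi}$ at the cross sections $\overline{\Sigma_1},\overline{\Sigma_2}$.

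Next I would establish continuity of $P\mapsto Q_P$. The radial coordinate $\overline{\rho_0}$ is a smooth function of $\widetilde t_1$ by \eqref{eq:z}, which in turn depends continuously on $P$ through \eqref{eq:T0i}, \eqref{T0} and Lemmas~\ref{le:calculus2}--\ref{le:prop-t-til}; the angular coordinate $\overline{\theta_0}$ (or $\overline{\theta_1}$ when $c\neq 1$) is a continuous function of $\widetilde t_0,\widetilde t_1,\widetilde t_2,\widetilde t_3$ by the closed-form expressions in Subsection~\ref{sse:special-point}. In the degenerate case $a=c=1$ one chooses any fixed continuous section $\theta_0\mapsto\overline{\theta_0}$ (e.g.\ the identity), so continuity still holds. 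Extending $H$ by flow-equivariance preserves continuity on a full neighborhood of the cycle minus $\mathcal{H}_f$, and one checks that $H$ extends continuously to $\mathcal{H}_f$ itself by sending $\C_j$ to $\overline{\C_j}$ via the identification that matches the minimal periods $\wp_j=\overline{\wp_j}$, using the convergence $\lim_{i\to\infty}(t_i-\widetilde t_i)=0$ from \eqref{eq:times-even-til} and Lemma~\ref{le:prop-t-til}(1) to control the behaviour as $P$ approaches $\mathcal{H}_f$.

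Bijectivity follows from symmetry: reversing the roles of $f$ and $g$ in the entire construction of Sections~\ref{sse:time2} and \ref{sse:special-point} yields a map $H'$ from a neighborhood of $\mathcal{H}_g$ into a neighborhood of $\mathcal{H}_f$, built from the same numerical data. By the unique-recovery computations of Subsection~\ref{sse:special-point} (unique $\rho_0$ from \eqref{eq:z}, unique $\theta_0$ or $\theta_1$ from \eqref{eq:theta} and its analogue, or the trivial choice when $a=c=1$), one has $H'\circ H=\mathrm{id}$ on $\Sigma_2\cap\Out(\C_2)$, and then on a full neighborhood of $\mathcal{H}_f$ by flow-equivariance. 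Time-equivariance $H\circ\varphi_t=\overline{\varphi}_t\circ H$ is the very definition of the extension off the cross section, so once well-definedness is settled this is automatic.

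The main obstacle I anticipate is continuity at the cycle $\mathcal{H}_f$ itself. Points on $\mathcal{H}_f$ are limits of orbits whose sequences $(\widetilde t_i)$ grow without bound and whose hitting points $\rho_{2i}\to R_2$; one must show that the images under $H$ converge to the corresponding points on $\mathcal{H}_g$, uniformly in the angular coordinate, so that the continuous extension of $H$ across $\mathcal{H}_f$ is compatible with the periodic flows on $\overline{\C_1}$ and $\overline{\C_2}$. This is precisely where the equality of the minimal periods $\wp_j=\overline{\wp_j}$ and of the weighted angular invariants $\omega_1+\gamma_1\omega_2$ and $\omega_2+\gamma_2\omega_1$ is used, together with the geometric bound $\sum_i |J_i|<\infty$ of Lemma~\ref{le:calculus2}, which ensures that the total angular defect accumulated during the infinite itinerary is finite and that $H$ extends continuously to a homeomorphism on a neighborhood of $\mathcal{H}_f$.
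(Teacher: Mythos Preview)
The paper does not supply a proof of this lemma: it is stated with attribution to \cite{CR2017} and the argument is deferred entirely to that reference, after which the proof of Theorem~\ref{teo:maintheorem-1} is declared complete. Your outline is therefore more detailed than anything the paper itself provides, and it follows the construction laid out in Subsections~\ref{sse:time2}--\ref{sse:special-point} in the expected way: verify well-definedness, continuity of $P\mapsto Q_P$ through the adjusted times $(\widetilde t_i)$, bijectivity by running the construction symmetrically for $g$, and time-equivariance by definition of the extension.

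One point you gloss over that the paper's construction does address (though only in passing) is the consistency of the flow-extension under returns to the cross section. If $P'$ is the first $f$-return of $P$ to $\Out^+(\C_2)$, the extension requires $Q_{P'}=\overline\varphi_{t_2}(Q_P)$; but $Q_P$ has $g$-hitting times $(\widetilde t_i)$, so its first $g$-return occurs at time $\widetilde t_2$, not $t_2$. The paper handles this by \emph{adjusting the cross sections} $\Sigma_1,\Sigma_2$ for $f$ so that $P$ itself has $f$-hitting times $(\widetilde t_i)$ at the new sections (this is the role of the point $\widetilde P$ and the remark that the new sections are close to the old because $\lim(t_i-\widetilde t_i)=0$). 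With that adjustment the return times for $f$ and $g$ agree and the extension is consistent. Your paragraph on well-definedness should incorporate this step rather than work directly with the original first-hitting-time $t_{\Sigma_2}$.
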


This ends the proof of Theorem~\ref{teo:maintheorem-1}.\\

\section{Final remark}\label{final}

The proof of Theorem \ref{teo:maintheorem-1} may be easily adapted to the case $\C_1 = \C_2$, thereby providing a complete set of invariants for an attracting homoclinic cycle associated to a periodic solution of a vector field in $\mathfrak{X}^r_{\text{PtoP}}(\EU^3)$, subject to the condition \eqref{times1}. More precisely, the corresponding complete set of invariants reduces to
$$\left\{\wp_1, \, \,\gamma_1, \, \,\omega_1, \,   \,-\frac{1}{E_1} \log b + s_1\, (1-\gamma_1)\right\}.$$

Regarding the construction of invariants under conjugacy for homoclinic cycles of a vector field, we refer the reader to \cite{Togawa}, where Togawa analyzes a homoclinic cycle of a saddle-focus and shows, using a knot-like argument, that the saddle-index is a conjugacy invariant;  to the paper  \cite{Arnold}, where Arnold \emph{et al} prove that the saddle-index is in fact an invariant under topological equivalence; and to the work  \cite{Dufraine} whose author, in the same setting, describes a new invariant under conjugacy given by the absolute value of the imaginary part of the complex eigenvalues of the saddle-focus. The search for a complete set of invariants for {more general} homoclinic cycles associated to either a saddle-focus or a periodic solution is still an open problem.

\section{An example}\label{Example}

In this section we present a family of vector fields in $\RR^3$ satisfying properties (P1)--(P6) obtained from Bowen's example presented in \cite{Takens94}. The latter is a $C^\infty$ vector field in the plane with structurally unstable connections between two equilibria. We will use the technique introduced in \cite{Chossat86} and further explored in \cite{Melbourne, ACL06}, combined with symmetry breaking, to lift Bowen's example to a vector field in $\RR^3$ with periodic solutions involved in a heteroclinic cycle satisfying the conditions stated in Section~\ref{se:hypotheses}.

\subsection{Lifting and its properties}\label{lifting}

The authors of \cite{ACL06, Rodrigues} investigate how some properties of a $\ZZ_2$--equivariant vector field on $\RR^n$ lift by a rotation to properties of a corresponding vector field on $\RR^{n+1}$. For the sake of completeness,  we review some of these properties. Let $X_n$ be a $\ZZ_2$--equivariant vector field on $\RR^n$. Without loss of generality, we may assume that $X_n$ is equivariant by the action of
$$T_n(x_1, x_2, ...., x_{n-1}, y)=  (x_1, x_2, ...., x_{n-1}, - y).$$
The vector field $X_{n+1}$ on $\RR^{n+1}$ is obtained by adding the auxiliary equation $\dot\theta=\omega>0$ and interpreting $(y, \theta)$ as polar coordinates. In cartesian coordinates $(x_1, . . . , x_{n-1}, r_1, r_2) \in \RR^{n+1}$, this extra equation corresponds to the system $r_1 = |y| \cos \theta$ and $r_2 = |y| \sin \theta$. The resulting vector field $X_{n+1}$ on $\RR^{n+1}$ is called the \emph{lift by rotation of} $X_n$, and is $\mathbb{SO}(2)$--equivariant in the last two coordinates.
\medbreak

Given a set $\Lambda \subset \RR^n$, let $\mathcal{L}(\Lambda)\subset \RR^{n+1}$ be the lift by rotation of $\Lambda$, that is,
$$\Big\{(x_1,...,x_{n-1}, r_1, r_2) \in \RR^{n+1} \colon \, (x_1,\ldots,x_{n-1},||(r_1,r_2)||) \quad \text{or} \quad (x_1,\ldots,x_{n-1},-||(r_1,r_2)||) \in \Lambda\Big\}.$$
It was shown in \cite[Section 3]{ACL06} that, if $X_n$ is a $\ZZ_2(T_n)$--equivariant vector field in $\RR^n$ and $X_{n+1}$ is its lift by rotation to $\RR^{n+1}$, then:
\medskip
\begin{enumerate}
\item If $p$ is a hyperbolic equilibrium of $X_n$, then $\mathcal{L}(\{p\})$ is a hyperbolic periodic orbit of $X_{n+1}$ with minimal period $\frac{2\pi}{\omega}$.
\medskip
\item If $[p_1 \to p_2]$ is a $k$-dimensional heteroclinic connection between equilibria $p_1$ and $p_2$ and it is not contained in $\text{Fix}(\ZZ_2(T_n))$, then it lifts to a $(k + 1)$-dimensional connection between the periodic orbits $\mathcal{L}(\{p_1\})$ and $\mathcal{L}(\{p_2\})$ of $X_{n+1}$.
\medskip
\item If $\Lambda$ is a compact $X_n$--invariant asymptotically stable set, then $\mathcal{L}(\Lambda)$ is a compact $X_{n+1}$--invariant
asymptotically stable set.
\end{enumerate}

\subsection{Bowen's example}\label{subsecBowen} Consider the system of differential equations
\begin{equation}
\label{example 1}
\left\{
\begin{array}{l}
\dot{x}=-y \\
\dot{y}=x-x^3
\end{array}
\right.
\end{equation}
whose equilibria are $\mathcal{O} = (0,0)$ and $P^\pm=(\pm 1, 0)$. This is a conservative system, with first integral given by
$$\vv(x,y) = \frac{x^2 }{2}\left(1-\frac{x^2}{2} \right)+\frac{y^2}{2}.$$
It is easy to check that the origin $\mathcal{O}$ is a center. The equilibria $P^\pm$ are saddles  with eigenvalues $\pm \sqrt{2}$. They are contained in the $\vv$-energy level $\vv \equiv 1/4$, and therefore there are two one-dimensional connections between them, one from $P^+$ to $P^-$ and another from $P^-$ to $P^+$, we denote by $[P^+ \rightarrow P^-]$ and $[P^- \rightarrow P^+]$, respectively. Let $\mathcal{H}_0$ be this heteroclinic cycle. The open domain $\mathcal{D}$ bounded by $\mathcal{H}_0$ and containing $\mathcal{O}$ is filled by closed trajectories and we have $0 \le \vv < 1/4$. Notice also that the boundary of $\mathcal{D}$ intersects the line $x=0$ at the points $(0,\pm\sqrt{2}/2)$. See Figure~\ref{example1}.

\begin{figure}[h]
\begin{center}
\includegraphics[height=8cm]{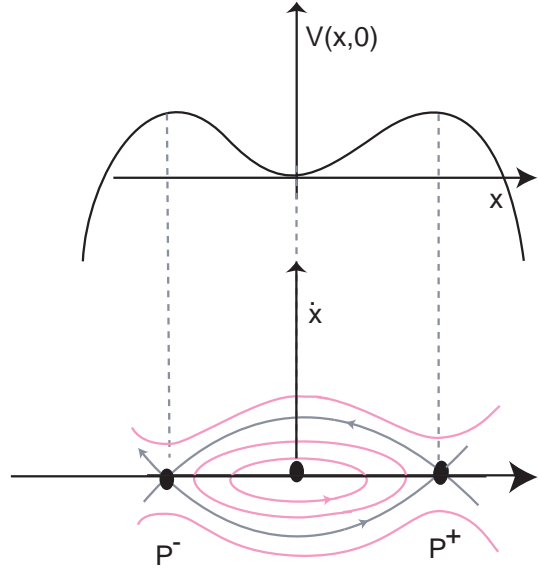}
\end{center}
\caption{\small Phase diagram of \eqref{example 1}.}
\label{example1}
\end{figure}

\subsection{A perturbation of Bowen's example}
Given $\varepsilon >0$, consider the following perturbation of \eqref{example 1} defined by the differential equations
\begin{equation}
\label{example 2}
\left\{
\begin{array}{l}
\dot{x}=-y \\
\dot{y}=x-x^3 -\varepsilon \,y\left(\vv(x,y)-\frac{1}{4}\right).
\end{array}
\right.
\end{equation}
For $\varepsilon > 0$ small enough, the heteroclinic cycle $\mathcal{H}_0$ persists, but now the $\omega$-limit of every trajectory with initial condition in $\mathcal{D}\setminus\{(0,0)\}$ is $\mathcal{H}_0$. Check these details in Figure~\ref{example2}.

\begin{figure}[h]
\begin{center}
\includegraphics[height=3cm]{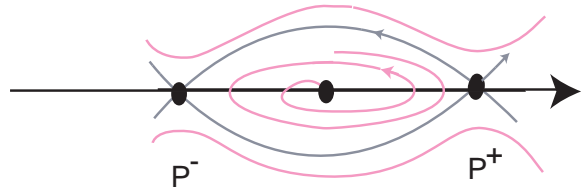}
\end{center}
\caption{\small  Bowen's example \eqref{example 2} with $\varepsilon>0$. }
\label{example2}
\end{figure}

\subsection{The lifting of Bowen's cycle}
According to the lifting procedure described above, we now construct a vector field on $\RR^3$ with two periodic solutions linked in a cyclic way within a configuration similar to the heteroclinic cycle $\mathcal{H}_0$ of Bowen's example. Noticing that $\mathcal{H}_0$ is contained in the half plane $y > -1$, one rotates the phase diagram of Bowen's perturbed example around the line $y=-1$. This transforms the equilibria $P^\pm$ into saddle periodic solutions as in (P1), and the one-dimensional heteroclinic connections into two-dimensional ones which are diffeomorphic to cylinders as in (P2). Meanwhile, the attracting character of the cycle $\mathcal{H}_0$ is preserved and one connected component of the stable manifold of each periodic solution coincides with a connected component of the unstable manifold of the other as demanded in (P3).

More precisely, in the region $y > - 1$, we may write $y+1 = r^2$ for a unique $r > 0$, and with $\dot{r}= \frac{\dot{y}}{2r}$ the system of equations \eqref{example 2} takes the form
$$\left\{
\begin{array}{l}
\dot{x}= 1-r^2\\
\dot{r}= \frac{1}{2r}\left[x-x^3-\varepsilon\left(\frac{x^2}{2}- \frac{x^4}{4} + \frac{(r^2-1)^2}{2}-\frac{1}{4}\right)(r^2-1)\right].
\end{array}
\right.
$$
Multiplying both equations by the positive term $2r^2$ does not qualitatively affect the phase portrait, thus \eqref{example 2} in the region $y>-1$ is equivalent to
\begin{equation}\label{example 3}
\left\{
\begin{array}{l}
\dot{x}= 2r^2(1-r^2) \\
\dot{r}= r\left(x-x^3-\varepsilon \left(\frac{x^2}{2}- \frac{x^4}{4} + \frac{(r^2-1)^2}{2}-\frac{1}{4}\right)(r^2-1)\right)
\end{array}
\right.
\end{equation}
in the domain $r>0$. It is straightforward to check that the system of equations \eqref{example 3} for $(x,r) \in \RR^2$ has the following properties:
\begin{enumerate}
\item The line $r=0$ is flow-invariant.
\medskip
\item It is $\ZZ_2(\Gamma)$--equivariant, where $\Gamma(x, r)=(x, -r)$.
\end{enumerate}
\medskip
This allows us to apply the lifting procedure as described above, performing the mentioned rotation of the phase diagram of \eqref{example 3}: adding a new variable $\theta$ with $\dot{\theta}=\omega$, for some constant $\omega >0$ and taking Cartesian coordinates  $(x,r_1,r_2) = (x, r\cos \theta, r\sin \theta)$, the system of equations \eqref{example 3} becomes
\begin{equation}\label{example 4}
\left\{\begin{array}{l}
\dot{x}= 2(1 - r_1^2 - r_2^2)(r_1^2 + r_2^2) \\ \\
\dot{r}_1 = r_1 \left[x-x^3-\varepsilon(r_1^2 + r_2^2 - 1)\left(\frac{x^2}{2} - \frac{x^4}{4} +
 \frac{(r_1^2 + r_2^2 - 1)}{2}-\frac{1}{4}\right)\right] -\omega r_2\\ \\
\dot{r}_2 = r_2 \left[x-x^3-\varepsilon(r_1^2 + r_2^2 - 1)\left(\frac{x^2}{2} - \frac{x^4}{4} +
\frac{(r_1^2 + r_2^2 - 1)}{2} - \frac{1}{4}\right)\right] +\omega r_1.
\end{array}
\right.
\end{equation}
The equilibria $P^+$ and $P^-$ lift to two hyperbolic closed orbits satisfying (P1), namely
\begin{eqnarray*}
\mathcal{C}_1 &:=& \Big\{(x,r_1,r_2) \colon \, x=1, \quad  r_1^2 + r_2^2=1\Big\} \\
\mathcal{C}_2 &:=& \Big\{(x,r_1,r_2) \colon \, x=-1, \quad  r_1^2 + r_2^2=1\Big\}
\end{eqnarray*}
with radius $R_1 = R_2 = 1$. The Floquet multipliers of $\C_1$ and $\C_2$ are given by $e^{\sqrt{2}}>1$ and $e^{-\sqrt{2}}<1$ (details in \cite{Field}). Their two-dimensional stable and unstable manifolds are homeomorphic to cylinders and, for $\varepsilon>0$ small enough, the flow of \eqref{example 4} has a heteroclinic cycle $\mathcal{H}$ as stated in (P2) and (P3).

Admittedly, conditions $C_1 > E_1$ and $C_2 > E_2$ of item (P1) fail, and so Krupa-Melbourne's criterium of \cite{KM1, KM2} is no longer applicable. However, by construction, $\mathcal{H}_0$ is asymptotically stable, and so is $\mathcal{H}$. As explained in Subsection \ref{lifting}, the basin of attraction of $\mathcal{H}$ contains $\mathcal{L}(\mathcal{D}\backslash\{(0,0)\})$. In what follows, $f_\mathcal{B}$ stands for the vector field just obtained as the lifting of the perturbed version of Bowen's example.

\subsection{Checking conditions (P4) and (P5) for $f_\mathcal{B}$} $\,$
For the unlifted system \eqref{example 2}, we may choose $\varepsilon>0$ and $K>0$ to define global sections
\begin{eqnarray*}
\Out (P^+) &=& \Big\{(x,y): x=1-\varepsilon, \quad  y\in [0, K \varepsilon] \Big\} \\
\Inn (P^-) &=& \Big\{(x,y): x=-1+\varepsilon,\quad  y\in [0, K \varepsilon] \Big\}
\end{eqnarray*}
and, in a similar way, the sections $\Out (P^-)$ and $\Inn (P^+)$. Therefore, the cross sections for  \eqref{example 2} may be written as
\begin{eqnarray*}
\Out (\C_1) &=& \Big\{(x,r_1, r_2) \colon x = 1-\varepsilon, \quad r_1^2 + r_2^2 \,\in \,[1,\,1+ K \varepsilon] \Big\} \\
\Inn(\C_2) &=& \Big\{(x,r_1, r_2) \colon x = -1+\varepsilon, \quad r_1^2+r_2^2 \,\in \,[1,\,1+ K \varepsilon] \Big\}
\end{eqnarray*}
and similarly for $\Out (\C_2)$ and $\Inn (\C_1)$. If $r_1r_2\neq 0$, changing coordinates as follows
$$\rho \leftrightarrow \sqrt{r_1^2+r_2^2} \quad \quad \quad \theta  \leftrightarrow \text{arctan} \left(\frac{r_2}{r_1}\right) + m\pi, \quad m=0,1 \quad \quad \quad z \leftrightarrow x$$
we identify $(x, r_1, r_2)$ with $(\rho, \theta, z)$ as done in Section~\ref{se:Local}. Hence the transition from $\Out (\C_1)$ to $\Inn(\C_2)$ maps $(\rho_0, \theta_0, \varepsilon)$ to $(R_1+\varepsilon, \theta_1, z_1)= (1+\varepsilon, \theta_1, z_1)$ and is linear, with a diagonal matrix given in the cylindrical coordinates $(\rho, \theta, z)$ by the matrix
$\tiny
\left[ {\begin{array}{ccc}
1 & 0 & 0 \\
0 & a & 0 \\
0 & 0 & b \\  \end{array} }
\right]$
for some $a>0$ and $b>0$. The same argument applies to the connection $[\C_2 \rightarrow \C_1]$. This completes the verification of condition (P4).\\

In order to characterize the first return map to the cross sections of lifted system \eqref{example 4}, we add the following assumptions to the vector field \eqref{example 3}:
\medskip

\textbf{(H1):} There are $s_1 \geq 0$ and an open set $U_1 \subset \Out(P^+)$ containing $W^u(P^+)$ such that the transition time to $\Inn (P^-)$ of all trajectories starting in $U_1$ is constant and equal to $s_1$. The transition from $U_1$ to $\Inn(P^-)$ maps $(1-\varepsilon, \,y)$ to $(-1+\varepsilon, \,b\,y)$. \\

\textbf{(H2):} Analogously, there are $s_2 \geq 0$ and an open set $U_2 \subset \Out(P^-)$ containing  $W^u(P^-)$ such that the transition time to $\Inn (P^+)$ of all trajectories starting in $U_2$ is constant and equal to $s_2$. The transition from $U_2$ to $\Inn(P^+)$ maps $(-1+\varepsilon, \,y)$ into $(1-\varepsilon, \,d\,y)$.\\

By construction, property (P6) is guaranteed. We now proceed to check condition (P5).
\begin{lemma} $\,$
\begin{enumerate}
\item For $j \in \{1,2\}$, the transition times are constant on $\mathcal{L}(U_j)$ and equal to $s_j$.
\item The angular speeds of the periodic solutions $\C_1$ and $\C_2$ are equal to $\omega$.
\end{enumerate}
\end{lemma}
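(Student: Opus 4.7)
The plan rests on the observation that the cylindrical lift preserves the $(x,r)$--dynamics of the planar system exactly, while decoupling the angular direction. First I would verify this explicitly: starting from \eqref{example 4}, one computes
\[
r\dot r = r_1\dot r_1 + r_2\dot r_2 = (r_1^2+r_2^2)\bigl[x-x^3-\varepsilon(r_1^2+r_2^2-1)(\tfrac{x^2}{2}-\tfrac{x^4}{4}+\tfrac{r_1^2+r_2^2-1}{2}-\tfrac14)\bigr],
\]
so that $\dot r$ reproduces the second equation of \eqref{example 3}. Since $\dot x$ in \eqref{example 4} depends only on $r_1^2+r_2^2=r^2$, the projection $\Pi\colon(x,r_1,r_2)\mapsto(x,r)$ conjugates the flow of \eqref{example 4} with the planar flow \eqref{example 3} on $\{r>0\}$. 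Meanwhile, passing to polar coordinates $(r_1,r_2)=(r\cos\theta,r\sin\theta)$, a direct substitution using $\dot r_1=\dot r\cos\theta-r\dot\theta\sin\theta$ and the analogous expression for $\dot r_2$ shows that the $\omega r_2$ and $\omega r_1$ terms in \eqref{example 4} contribute exactly $\dot\theta=\omega$.

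For part (1), I would note that the cross sections $\Out(\mathcal{C}_1)$ and $\Inn(\mathcal{C}_2)$ are, by their very definition in the previous subsection, the lifts $\mathcal{L}(\Out(P^+))$ and $\mathcal{L}(\Inn(P^-))$, and that $\mathcal{L}(U_1)=\Pi^{-1}(U_1)$. Given any initial condition $q=(x_0,r_1^0,r_2^0)\in\mathcal{L}(U_1)$, its trajectory under \eqref{example 4} projects via $\Pi$ to the trajectory of \eqref{example 3} issued from $\Pi(q)\in U_1$. The instant at which the planar trajectory first meets $\Inn(P^-)$ is exactly the instant at which the lifted trajectory meets $\mathcal{L}(\Inn(P^-))=\Inn(\mathcal{C}_2)$, because membership of these sections is determined solely by $(x,r)$. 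Hypothesis (H1) fixes this planar transition time at $s_1$, independently of the starting point in $U_1$, and hence the lifted transition time equals $s_1$ on all of $\mathcal{L}(U_1)$. The same argument with (H2) in place of (H1) handles the transition from $\mathcal{L}(U_2)$ to $\Inn(\mathcal{C}_1)$.

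For part (2), I would simply remark that both $\mathcal{C}_1=\{x=1,\,r=1\}$ and $\mathcal{C}_2=\{x=-1,\,r=1\}$ lie in the domain $r>0$ where the polar change of coordinates is valid, and on each of them the third equation of the lifted system reduces to $\dot\theta=\omega$ identically (not just to first order). Comparing with the local model \eqref{ode of linearization}, this identifies the angular speed $\omega_j$ appearing in (P1) with the single constant $\omega$ for both $j\in\{1,2\}$.

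The only delicate point I foresee is that the passage to the $(\rho,\theta,z)$ cylindrical coordinates used in Section~\ref{se:Local} is performed independently around each $\mathcal{C}_j$ (with the appropriate translation in $x$), so one should make sure that the coordinate $z=x\mp 1$ used near $\mathcal{C}_j$ does not alter the identification $\dot\theta=\omega$; this is immediate since the translation in $x$ does not affect the $\theta$--equation. All remaining verifications are routine substitutions, so I do not expect any serious obstacle.
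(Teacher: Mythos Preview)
Your proposal is correct and follows essentially the same approach as the paper: both argue that the cross sections for the lifted system are the rotational lifts of the planar ones, so that transition times are inherited directly from (H1)--(H2), and both read off the angular speed from the explicit parameterization of $\mathcal{C}_1$ and $\mathcal{C}_2$ (equivalently, from $\dot\theta=\omega$). Your version is simply more explicit, carrying out the verification that $\Pi$ semi-conjugates the flows and that the angular equation decouples, whereas the paper appeals to the lifting construction and writes down the parameterization $t\mapsto(\pm 1,\cos(\omega t),\sin(\omega t))$ without further computation.
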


\begin{proof}
Item (1) follows from the way the lifting is carried out, ensuring that the global cross sections $\Inn(\C_1)$, $\Inn(\C_2)$, $\Out(\C_1)$ and $\Out(\C_2)$ are lifts by rotation of $\Inn(P^+)$, $\Inn(P^-)$, $\Out(P^+)$ and $\Out(P^-)$, respectively. Using \textbf{(H1)}, if $P \in \mathcal{L}(U_1)\subset \Out (\C_1)$, then the transition time of its trajectory to $\Inn (C_2)$ is $s_1$. Analogous conclusion holds for $P \in \mathcal{L}(U_2)$ using \textbf{(H2)}. Part (2) of the statement is a consequence of the fact that the solutions corresponding to the periodic solutions are parameterized by $t \mapsto (\pm 1, \ \cos(\omega t),\ \sin(\omega t))$.
\end{proof}

Figure ~\ref{lifting1} summarizes the previous information concerning the lifted dynamics.

\begin{figure}[h]
\begin{center}
\includegraphics[height=13.4cm]{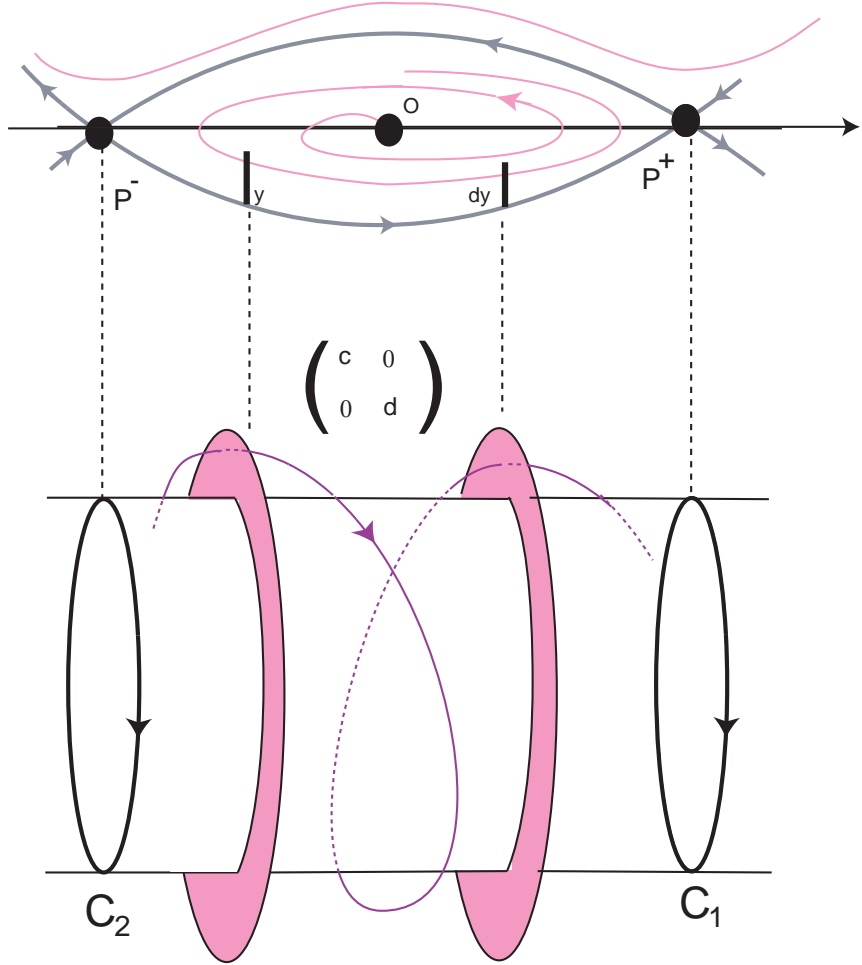}
\end{center}
\caption{\small  Illustration of the properties that are conveyed from \eqref{example 3} to its lifting \eqref{example 4}.}
\label{lifting1}
\end{figure}

\subsection{Invariants for $f_\mathcal{B}$}

Now Theorem~\ref{teo:maintheorem-1} applies to the heteroclinic cycle $\mathcal{H}$ and its basin of attraction (which contains ${\mathcal{L}(\mathcal{D} \backslash\{(0,0)\})}$) of the example \eqref{example 4}, indicating that the set
$$\left\{\omega, \, \gamma_1, \,\gamma_2, \, \,-\frac{1}{E_1} \log d + (s_1-\gamma_1\,s_2), \,-\frac{1}{E_2} \log b + (s_2-\gamma_2 \,s_1)\right\}$$
is a complete family of invariants for $f_\mathcal{B}$ under topological conjugacy in ${\mathcal{L}(\mathcal{D}\backslash\{(0,0)\})}$. In addition, for the example \eqref{example 4} we have $E_1 = E_2 = \sqrt{2}$ and $\gamma_1 = \gamma_2 = 1$. The values of the constants $s_1$ and $s_2$ depend on the chosen cross sections for the perturbed Bowen's example.


\begin{thebibliography}{99}

\bibitem{Arnold} V. Arnold, V. Afraimovich, Yu. Ilyashenko, L.P.  Shilnikov. \emph{Bifurcation Theory and Catastrophe Theory.} Encyclopaedia Math. Sci. 5, Springer, 1999.

\bibitem{ACL06} M. Aguiar, S.B. Castro,  I.S. Labouriau. \emph{Simple vector fields with complex behavior.} Internat. J. Bifur. Chaos Appl. Sci. Engrg. 16(2) (2006) 369--381.

\bibitem{Beloqui} J.A. Beloqui. \emph{Modulus of stability for vector fields on 3-manifolds.} J. Differential Equations 65 (1986) 374--396.

\bibitem{BonDuf} Ch. Bonatti, E. Dufraine. \emph{Équivalence topologique de connexions de selles en dimension 3.} Ergod. Th. \& Dynam. Sys. 23(5) (2003) 1347--1381.

\bibitem{CR2017} M. Carvalho, A.A.P. Rodrigues. \emph{Complete set of invariants for a Bykov attractor.} Regul. Chaotic. Dyn. 23(3) (2018) 227--247.

\bibitem{Chossat86} P. Chossat, M. Golubitsky, B.L. Keyfitz. \emph{Hopf-Hopf mode interactions with O(2) symmetry.} Dynamics and Stability of Systems 1(4) (1986) 255--292.

\bibitem{Dufraine} E. Dufraine. \emph{Some topological invariants for three-dimensional flows.} Chaos 11(3) (2011) 443--448.

\bibitem{Field} M. Field. \emph{ Equivariant dynamical systems.} Trans. Amer. Math. Soc. 259(1) (1980) 185--205.


\bibitem{KM1} M. Krupa, I. Melbourne. \emph{Asymptotic stability of heteroclinic cycles in systems with symmetry.} Ergod. Th. \& Dynam. Sys. 15 (1995) 121--147.

\bibitem{KM2} M. Krupa, I. Melbourne. \emph{Asymptotic stability of heteroclinic cycles in systems with symmetry II.} Proc. Roy. Soc. Edinburg, Sect. A 134 (2004) 1177--1197.

\bibitem{LR2015} I. S. Labouriau, A. A. P. Rodrigues. \emph{Dense heteroclinic tangencies near a Bykov cycle}. J. Diff. Eqs. 259 (2015) 5875--5902.

\bibitem{LR2017} I. S. Labouriau, A. A. P. Rodrigues. \emph{On Takens’ last problem: tangencies and time averages near heteroclinic networks}. Nonlinearity 30 (2017) 1876--1910.

\bibitem{Melbourne} I. Melbourne. \emph{Intermittency as a codimension-three phenomenon.} J. Dynam. Differential Equations 1(4) (1989) 347--367.

\bibitem{Palis} J. Palis. \emph{A differentiable invariant of topological conjugacies and moduli of stability.} Dynamical Systems 3, Asterisque 51, Soc. Math. France (1987) 335--346.

\bibitem{Rodrigues2015} A.A.P. Rodrigues. \emph{Moduli for heteroclinic connections involving saddle-foci and periodic solutions.} Discr. Contin. Dynam. Syst. 35(7) (2015) 3155--3182.

\bibitem{Rodrigues} A.A.P. Rodrigues, I.S. Labouriau, M. Aguiar. \emph{Chaotic double cycling}. Dynamical Systems: an International Journal 26(2) (2011) 199--233.

\bibitem{R2001} D. Ruelle. \emph{Historic behaviour in smooth dynamical systems.} Global Analysis of Dynamical Systems, ed. H. W. Broer \emph{et al}, Institute of Physics Publishing, Bristol, 2001.

\bibitem{SuSi} A. Susín, C. Simó. \emph{On moduli of conjugation for some n-dimensional vector fields.} J. Differential Equations 79(1) (1989) 168--177.

\bibitem{Takens71} F. Takens. \emph{Partially hyperbolic fixed points}. Topology 10 (1971) 133--147.

\bibitem{Takens94} F. Takens. \emph{Heteroclinic attractors: Time averages and moduli of topological conjugacy}. Bull. Braz. Math. Soc. 25 (1994) 107--120.

\bibitem{Togawa} Y. Togawa. \emph{A modulus of 3-dimensional vector fields.} Ergod. Th. \& Dynam. Sys. 7 (1987) 295--301.

\bibitem{ZKK} W. Zhang, B. Krauskopf, V. Kirk. \emph{How to find a codimension-one heteroclinic cycle between two periodic orbits.} Discr. Contin. Dynam. Syst. 32(8) (2012) 2825--2851.

\end{thebibliography}
\end{document}